\newcommand{\R}{\mathbb{R}}
\newcommand{\Lip}{\mathop{\rm Lip}}
\newcommand{\reg}{\mathop{\rm reg}}
\newcommand{\argmax}{\mathop{\rm arg\, max}}
\newcommand{\argmin}{\mathop{\rm arg\, min}}
\newcommand{\hx}{\hat{x}}
\newcommand{\tpi}{{\tilde \pi}}
\newcommand{\hQ}{{\hat Q}}
\newcommand{\hmu}{{\hat \mu}}
\newcommand{\sX}{{\mathsf X}}
\newcommand{\sA}{{\mathsf A}}
\newcommand{\sE}{{\mathsf E}}
\newcommand{\sU}{{\mathsf U}}
\newcommand{\F}{{\mathcal F}}
\newcommand{\C}{{\mathcal C}}
\newcommand{\Pnew}{{\mathcal P}}
\newtheorem{definition}{Definition}
\newtheorem{theorem}{Theorem}
\newtheorem{corollary}{Corollary}
\newtheorem{proposition}{Proposition}
\newtheorem{lemma}{Lemma}
\newtheorem{remark}{Remark}
\newtheorem{assumption}{Assumption}
\begin{document}

\title{Q-Learning in Regularized Mean-field Games\thanks{Corresponding author N.~Saldi.}
}


\author{Berkay Anahtarci         \and
        Can Deha Kariksiz	\and
        Naci Saldi
}


\institute{B. Anahtarci \at
              \"{O}zye\u{g}in University, \c{C}ekmek\"{o}y, \.{I}stanbul, Turkey \\
              \email{berkay.anahtarci@ozyegin.edu.tr}           
           \and
           C. D. Kariksiz \at
           \"{O}zye\u{g}in University, \c{C}ekmek\"{o}y, \.{I}stanbul, Turkey \\
           \email{deha.kariksiz@ozyegin.edu.tr}    
           \and
           N. Saldi \at
           Bilkent University, \c{C}ankaya, Ankara, Turkey \\
           \email{naci.saldi@bilkent.edu.tr}    
}

\date{Received: date / Accepted: date}

\maketitle

\begin{abstract}
In this paper, we introduce a regularized mean-field game and study learning of this game under an infinite-horizon discounted reward function. Regularization is introduced by adding a strongly concave regularization function to the one-stage reward function in the classical mean-field game model. We establish a value iteration based learning algorithm to this regularized mean-field game using fitted Q-learning. The regularization term in general makes reinforcement learning algorithm more robust to the system components. Moreover, it enables us to establish error analysis of the learning algorithm without imposing restrictive convexity assumptions on the system components, which are needed in the absence of a regularization term.   
\keywords{Mean-field games \and Q-learning \and Regularized Markov decision processes \and Discounted reward}
\end{abstract}

\section{Introduction}
\label{intro}
This paper deals with the learning of regularized mean-field games (MFGs) under an infinite-horizon discounted reward function.  Regularization is introduced by adding a strongly concave regularization function to the one-stage reward function in the classical mean-field game model. In this model, a single agent interacts with a huge population of other agents and compete with the collective behaviour of them through a mean-field term, which converges to the distribution of a single generic agent as the number of agents is taken to infinity. In the limiting case, a generic agent faces a single-agent stochastic control problem with a constraint on the state distribution at each time step. This condition specifies that the state distribution should be consistent with the behaviour of the total population. In other words, at each time step, the resulting distribution of the state of each agent is the same as the flow of the state distribution when the generic agent applies this policy. This stability condition between policy and state distribution flow is called the \textit{mean-field equilibrium}.

The theory of MFGs has emerged in the work of Lasry and Lions \cite{LaLi07}, where the standard terminology of mean-field games was introduced, and independently as stochastic dynamic games by Huang, Malham\'{e} and Caines \cite{HuMaCa06}, both considering continuous time non-cooperative differential games with large but finite number of asymptotically negligible anonymous agents in interaction along with their infinite limits to establish approximate Nash equilibria. In continuous-time differential games, characterization of the mean-field equilibrium is given by a 
coupled Hamilton-Jacobi-Bellman (HJB) equation and a Kolmogorov-Fokker-Planck (FPK) equation. We refer the reader to \cite{HuCaMa07,TeZhBa14,Hua10,BeFrPh13,Ca11,CaDe13,GoSa14,MoBa16} for studies of continuous-time mean-field games with different models and cost functions, such as games with major-minor players, risk-sensitive games, games with Markov jump parameters, and LQG games.

In comparison with the continuous-time framework, there are comparably fewer results available on discrete-time mean-field games in the literature. These works have mainly studied the settings where the state space is a discrete (finite or countable) set and the agents are only coupled by their cost functions; that is, the mean-field term does not influence the evolution of the agents' states. In \cite{GoMoSo10}, a mean-field game model with finite state is studied, and \cite{AdJoWe15} considers discrete-time mean-field games with an infinite-horizon discounted cost criterion over unbounded state spaces. Discrete-time mean-field games with linear state dynamics are studied in \cite{ElLiNi13,MoBa15,NoNa13,MoBa16-cdc}. 
References \cite{Bis15,Wie19,WiAl05,Sal19} study discrete-time mean-field games subject to the average cost optimality criterion. In \cite{SaBaRaMOR2}, authors consider a discrete-time risk-sensitive mean-field game with Polish state and action spaces. References \cite{SaBaRaSIAM,SaBaRaMOR1} consider a discrete-time mean-field game with Polish state and action spaces under the discounted cost optimality criterion for both the fully-observed case and the partially-observed case, respectively.

We note that the aforementioned papers, except linear models, mostly identify the existence of mean-field equilibrium and do not propose any algorithm with convergence guarantee to compute the mean-field equilibrium. In our recent work \cite{AnKaSa19}, this problem is explored for mean-field games with abstract state and action spaces under both discounted cost and average cost criteria, where we develop a value iteration algorithm and prove that this algorithm converges to the mean-field equilibrium. In \cite{AnKaSa19-b}, we generalize this value iteration algorithm to the model-free setting by using fitted Q-learning \cite{AnMuSz07}, which is preferred over a classical Q-learning algorithm since the action space is assumed to be a compact and convex subset of a finite dimensional Euclidean space. In order to establish the contractiveness of the optimality operator in this case, one needs to prove that the optimal policy is Lipschitz continuous with respect to the current mean-field term, since the optimal policy corresponding to the current mean-field term affects the next mean-field term in the value iteration algorithm. Although establishing the Lipschitz continuity of the optimal value function with respect to the mean-field term is straightforward, it is quite challenging to do the same for the optimal policy. To overcome this challenge, it was assumed in \cite[Assumption-2.1(d)]{AnKaSa19-b} that the function in the optimality equation is strongly convex and has Lipschitz continuous gradient, which restricts the applicability of the results. Moreover, as a result of these restrictive conditions, the proof of the contraction of the mean-field equilibrium operator is much more involved. Our novel approach in this paper is to introduce a strongly convex regularization function in the one-stage reward, which helps us to obtain Lipschitz continuity of the optimal policy with respect to the mean-field term via duality between strong convexity and smoothness, and generalize the results in \cite{AnKaSa19-b}. This allows us to significantly relax the assumptions on the system components and improve the theoretical analysis. In particular, as opposed to the unregularized case, we eliminate the need for strong convexity and smoothness assumptions on the system components when establishing the Lipschitz regularity of the optimal policy with respect to the mean-field term.

In the literature, the existence of mean-field equilibria has been established for discrete-time mean-field games under the discounted optimality criterion in \cite{SaBaRaSIAM}. However, learning discrete-time mean-field games has not been studied much, even for the classical case, until recently. In \cite{GuHuXuZh19}, authors establish a Q-learning algorithm to compute approximate mean-field equilibria for finite state-action mean-field games, where the convergence of the learning algorithm is dependent upon the assumption that the operators in the algorithm are contractive. In \cite{ElPeLaGePi19}, authors develop a fictitious play iterative learning algorithm for mean-field games with compact state and action spaces, where the dynamics of the state and the one-stage cost function satisfy certain structure, and suggest an error analysis of the learning algorithm for the deterministic game model (no noise term in the state dynamics). In \cite{CaLaTa19} authors study linear-quadratic mean-field games and establish the convergence of policy gradient algorithm. In \cite{FuYaChWa19}, an actor-critic algorithm to learn mean-field equilibrium for linear-quadratic mean-field games is developed. In \cite{YaYeTrXuZh18} a mean-field game in which agents can control their transition probabilities without any restriction is studied. In this case, the action space becomes the set of probability measures on the state space, and the authors are able to transform a mean-field game into an equivalent deterministic Markov decision process by extending the state and action spaces, establishing classical reinforcement learning algorithms to compute mean-field equilibrium. In the continuous-time setup, the following early reference \cite{YiMeMeSh14}  develops a learning algorithm for mean-field oscillator game model to obtain approximate Nash equilibrium (see also Example in \cite[Section IV-C]{MeMe10} for learning algorithm developed for continuous-time LQG mean-field game problem).

In misspecified control models, greedy algorithms often results in policies that are far from optimal. Our approach of making use of regularization also provides a way to overcome this problem. Most recent reinforcement algorithms use regularization to increase exploration and robustness, and the regularization is generally established via entropy or relative entropy. We refer the reader to \cite{GeScPi19} for an exhaustive review of the literature on regularized Markov decision processes (MDPs) and \cite{NeJoGo17} for a general framework on entropy-regularized MDPs. In this paper, we introduce regularized mean-field games, analogous to regularized MDPs. Our research seems to be the first one studying this problem. We propose a learning algorithm to compute an equilibrium solution for discrete-time regularized mean-field games under the discounted reward optimality criterion. A regularization term is added to the one-stage reward function in this game model, making the algorithm more robust since one can establish the Lipschitz sensitivity of the optimal policy to the system components using duality between strong convexity and smoothness, which is a common necessity in robustness analysis \cite[Remark 4.3]{KaYu20},\cite[Theorem 4.1]{KaYu19}. As mentioned above, regularization additionally provides an error analysis of the learning algorithm that is established under quite milder assumptions compared to the unregularized case. Therefore, this work covers a wider range of systems in practice.

The paper is set out as follows. In Section~\ref{mfgs}, we introduce classical and regularized mean-field games as well as finite-agent game, and define the classical and regularized mean-field equilibria. In Section~\ref{known-model}, we define mean-field equilibrium operator and show that the mean-field equilibrium operator is contractive. In Section~\ref{unknown-model}, we establish a Q-learning algorithm to compute approximate regularized-mean-field equilibrium and prove its convergence. In Section~\ref{num_ex}, we provide a numerical example to illustrate the effectiveness of the learning algorithm. Section~\ref{conc} concludes the paper.

\noindent\textbf{Notation.} 
For a finite set $\sE$, we let $\Pnew(\sE)$ denote the set of all probability distributions on $\sE$. In this paper, $\|\cdot\|_1$ denotes $l_1$-norm on $\Pnew(\sE)$. Total variation norm on $\Pnew(\sE)$ is denoted by $\|\cdot\|_{TV}$. For any probability measures $\mu,\nu \in \Pnew(\sE)$, we have 
$\|\mu-\nu\|_{TV} = \inf\left\{E^{\xi}[1_{\{x \neq y\}}]: \xi(\cdot,\sE) = \mu(\cdot) \text{ } \text{and} \text{ } \xi(\sE,\cdot) = \nu(\cdot)\right\}$ and the distribution $\xi$ on $\sE \times \sE$ that achieves this infimum is called optimal coupling between $\mu$ and $\nu$. It is known that $\|\cdot\|_1 = 2 \, \|\cdot\|_{TV}$ \cite[p. 141]{Geo11}.
In this paper, we will always endow $\Pnew(\sE)$ with $l_1$-norm. For any $e \in \sE$, $\delta_e$ is the Dirac delta distribution. We let $m(\cdot)$ denote the Lebesgue measure on appropriate finite dimensional Euclidean space $\R^d$. For any $a \in \R^d$ and $\rho > 0$, let $B(a,\rho) \coloneqq \{b: \|a-b\|_1 \leq \rho\}$. For any $a,b \in \R^d$, $\langle a,b \rangle$ denotes the inner product. Let $Q: \sE_1 \times \sE_2 \rightarrow \R$, where $\sE_1$ and $\sE_2$ are two sets. Then, we define $Q_{\max}(e_1) \coloneqq \sup_{e_2 \in \sE_2} Q(e_1,e_2)$. For any function class ${\mathcal G}$, let $V_{\mathcal{G}}$ denote its pseudo-dimension \cite{Vid10}. The notation $v\sim \nu$ means that the random element $v$ has distribution $\nu$. 

\section{Mean-Field Games}\label{mfgs}

A discrete-time mean-field game is specified by
$$
\left( \sX,\sA,p,r \right),
$$
where $\sX$ is the finite state space and $\sA$ is the finite action space. The components $p : \sX \times \sA \times \Pnew(\sX) \to \Pnew(\sX)$ and $r: \sX \times \sA \times \Pnew(\sX) \rightarrow [0,\infty)$ are the transition probability and the one-stage reward function, respectively. Therefore, given current state $x(t)$, action $a(t)$, and state-measure $\mu$, the reward $r(x(t),a(t),\mu)$ is received immediately, and the next state $x(t+1)$ evolves to a new state probabilistically according to the following distribution:
$$
x(t+1) \sim p(\cdot|x(t),a(t),\mu). 
$$
To complete the description of the model dynamics, we should also specify how the agent selects its action. To that end, a policy $\pi$ is a conditional distribution on $\sA$ given $\sX$; that is, $\pi: \sX \rightarrow \Pnew(\sA)$. Let $\Pi$ denote the set of all policies.

In mean-field games, a state-measure $\mu \in \Pnew(\sX)$ represents the collective behavior of the other agents\footnote{In classical mean-field game literature, the exogenous behaviour of the other agents is in general modeled by a state measure-flow $\{\mu_t\}$, $\mu_t \in \Pnew(\sX)$ for all $t$, which means that total population behaviour is non-stationary. In this paper, we only consider the stationary case; that is, $\mu_t = \mu$ for all $t$. Establishing a learning algorithm for the non-stationary case is more challenging.}; that is, $\mu$ can be considered as the infinite population limit of the empirical distribution of the states of other agents.

In this paper, we impose the following assumptions on the system components.

\begin{assumption}
	\label{as1}
	\begin{itemize}
		\item [ ]
		\item [(a)] The one-stage reward function $r$ satisfies the following Lipschitz bound:  
			\begin{multline*}
			|r(x,a,\mu) - r(\hat{x},\hat{a},\hat{\mu})| \\
			\hspace{-20pt}\leq L_1 \, \left( 1_{\{x \neq \hat{x}\}} + 2 \cdot 1_{\{a \neq \hat{a}\}}+\|\mu-\hat{\mu}\|_1 \right), \forall x,\hat{x}, \forall a,\hat{a}, \forall \mu, \hat{\mu}. \nonumber 
			\end{multline*}
		\item [(b)] The stochastic kernel $p(\,\cdot\,|x,a,\mu)$ satisfies the following Lipschitz bound:
		\begin{multline*}
		\|p(\cdot|x,a,\mu) - p(\cdot|\hat{x},\hat{a},\hat{\mu})\|_1 \\
		\hspace{-20pt}\leq K_1 \, \left( 1_{\{x \neq \hat{x}\}} + 2 \, \cdot  1_{\{a \neq \hat{a}\}}+\|\mu-\hat{\mu}\|_1 \right), \forall x,\hat{x}, \forall a,\hat{a}, \forall \mu, \hat{\mu}. 
		\end{multline*}
	\end{itemize}
\end{assumption}

Note that we can equivalently describe the model above as follows. In this equivalent model, we take action space to be the set of probability measures $\sU \coloneqq \Pnew(\sA)$ on the original action space $\sA$. Hence, the new action space $\sU$ is an uncountable, convex, and compact subset of $\R^{\sA}$ with dimension $|\sA|-1$. With this new action space, the new transition probability $P: \sX \times \sU \times \Pnew(\sX) \rightarrow \Pnew(\sX)$ and the new one-stage reward function $R: \sX \times \sU \times \Pnew(\sX) \rightarrow \R$ are defined as follows:
\begin{align}
P(\,\cdot\,|x,u,\mu) &\coloneqq \sum_{a \in \sA} p(\,\cdot\,|x,a,\mu) \, u(a), \nonumber \\
R(x,u,\mu) &\coloneqq \sum_{a \in \sA} r(x,a,\mu) \, u(a). \nonumber 
\end{align}
In this equivalent model, a policy $\pi$ is a deterministic function from state space $\sX$ to the new action space $\sU$. Therefore, for a fixed $\mu$ and $\pi$, the states and actions are evolved as follows: 
\begin{align}
x(t) &\sim P(\,\cdot\,|x(t-1),u(t-1),\mu), \text{ } t\geq1, \nonumber \\
u(t) &=\pi(x(t)), \text{ } t\geq0. \nonumber
\end{align}

In the remainder of this paper, we replace the original mean-field game model with this equivalent one. We prove below the conditions satisfied by the new transition probability $P$ and one-stage reward function $R$ under Assumption~\ref{as1}. 

\begin{proposition}\label{new-con}
	Under Assumption~\ref{as1}, $P$ and $R$ satisfy the following Lipschitz bounds:
	\begin{align}
	|R(x,u,\mu) - R(\hat{x},\hat{u},\hat{\mu})|  &\leq L_1 \, \left( 1_{\{x \neq \hat{x}\}} + \|u-\hat{u}\|_1 +\|\mu-\hat{\mu}\|_1 \right),\nonumber \\
	\|P(\cdot|x,u,\mu) - P(\cdot|\hat{x},\hat{u},\hat{\mu})\|_1 &\leq K_1 \, \left( 1_{\{x \neq \hat{x}\}} + \|u-\hat{u}\|_1 +\|\mu-\hat{\mu}\|_1 \right),\nonumber \end{align}
	$\forall x,\hat{x}, \forall u,\hat{u}, \forall \mu, \hat{\mu}$.
\end{proposition}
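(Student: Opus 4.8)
The plan is to prove both inequalities by the same two-step perturbation argument, exploiting that $P$ and $R$ are obtained from $p$ and $r$ by averaging against the action-measure $u \in \sU = \P(\sA)$. First I would perturb the state and mean-field arguments while holding $u$ fixed, which reduces directly to Assumption~\ref{as1}; then I would perturb only $u$, which is where the real work lies.

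For the reward, the decomposition I would use is obtained by adding and subtracting $\sum_{a} r(\hat x,a,\hat\mu)\,u(a)$:
\begin{align}
R(x,u,\mu) - R(\hat x, \hat u, \hat\mu)
&= \sum_{a \in \sA}\bigl(r(x,a,\mu) - r(\hat x,a,\hat\mu)\bigr)\,u(a) \nonumber \\
&\quad + \sum_{a \in \sA} r(\hat x,a,\hat\mu)\,\bigl(u(a) - \hat u(a)\bigr). \nonumber
\end{align}
In the first sum the actions coincide, so Assumption~\ref{as1}(a) gives $|r(x,a,\mu) - r(\hat x,a,\hat\mu)| \le L_1\bigl(1_{\{x \neq \hat x\}} + \|\mu - \hat\mu\|_1\bigr)$ uniformly in $a$; averaging against the probability vector $u$, whose entries sum to one, preserves this bound.

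The main obstacle is the second sum. Bounding it crudely by $\sum_{a} |r(\hat x,a,\hat\mu)|\,|u(a) - \hat u(a)|$ would require a uniform (sup-norm) bound on $r$ and would not produce the clean constant $L_1$. The key idea is instead to bring in the optimal coupling $\xi$ between $u$ and $\hat u$ introduced in the Notation section. Using the marginal identities $\sum_{a'} \xi(a,a') = u(a)$ and $\sum_{a} \xi(a,a') = \hat u(a')$, I would rewrite the second sum as $\sum_{a,a'}\bigl(r(\hat x,a,\hat\mu) - r(\hat x,a',\hat\mu)\bigr)\xi(a,a')$, apply the action part of Assumption~\ref{as1}(a), namely $|r(\hat x,a,\hat\mu) - r(\hat x,a',\hat\mu)| \le 2 L_1\,1_{\{a \neq a'\}}$, and then use $\sum_{a,a'} 1_{\{a \neq a'\}}\xi(a,a') = \|u - \hat u\|_{TV} = \tfrac{1}{2}\|u - \hat u\|_1$. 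Here the factor $2$ built into Assumption~\ref{as1} is cancelled exactly by the factor $\tfrac{1}{2}$ relating total variation to the $l_1$ norm, leaving the bound $L_1\,\|u - \hat u\|_1$. Summing the two contributions yields the first inequality.

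For the transition kernel the argument is identical, with $\|\cdot\|_1$ in place of the absolute value and the norm moved inside the sums by the triangle inequality. The state and mean-field step uses $\|p(\cdot|x,a,\mu) - p(\cdot|\hat x,a,\hat\mu)\|_1 \le K_1\bigl(1_{\{x \neq \hat x\}} + \|\mu - \hat\mu\|_1\bigr)$ from Assumption~\ref{as1}(b), and the action step again passes to the optimal coupling $\xi$ and invokes $\|p(\cdot|\hat x,a,\hat\mu) - p(\cdot|\hat x,a',\hat\mu)\|_1 \le 2 K_1\,1_{\{a \neq a'\}}$ to obtain $K_1\,\|u - \hat u\|_1$. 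I expect no further difficulty beyond carefully tracking which argument is perturbed at each stage.
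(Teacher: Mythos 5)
Your proof is correct, and it follows the same two-term telescoping skeleton as the paper's, but it handles the crucial action-perturbation step by a genuinely simpler route. The paper splits the two bounds' treatments: for $R$ it invokes the oscillation inequality $\bigl|\sum_a F(a)u(a)-\sum_a F(a)\hat u(a)\bigr|\le\tfrac{\lambda(F)}{2}\|u-\hat u\|_1$ with $\lambda(F)\le 2L_1$, while for $P$ it builds a \emph{nested} coupling construction: the optimal coupling $\xi$ of $(u,\hat u)$ \emph{and}, for each pair $(a,\hat a)$, an optimal coupling $K(\cdot|a,\hat a)$ of the kernels $M(\cdot|a)=p(\cdot|x,a,\mu)$ and $M(\cdot|\hat a)$, which are then composed to bound $2\|uM-\hat uM\|_{TV}$. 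You instead use the single coupling $\xi$ uniformly for both bounds: rewriting the action-difference term as $\sum_{a,a'}\bigl(\cdot\bigr)\xi(a,a')$ and applying the pointwise action-Lipschitz estimate from Assumption~1 (for $R$ directly, for $P$ after moving $\|\cdot\|_1$ inside the convex combination by the triangle inequality). This is more elementary — the triangle inequality yields exactly the intermediate estimate $\|uM-\hat uM\|_1\le\sum_{a,a'}\|M(\cdot|a)-M(\cdot|a')\|_1\,\xi(a,a')$ that the paper obtains via the second coupling layer — and it treats $R$ and $P$ by one and the same argument. The exact cancellation of the factor $2$ in Assumption~1 against $\|\cdot\|_{TV}=\tfrac12\|\cdot\|_1$, which is what makes the constants come out as $L_1$ and $K_1$, is identical in both proofs; the reversed order of which variables you perturb first is immaterial.
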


\begin{proof}
The proof is in Appendix~\ref{app0}.\qed
\end{proof}

In the following  section, we introduce regularized mean-field games and the adapted optimality notion.


\subsection{Regularized Mean-Field Games}

A theory of regularized Markov decision processes (MDPs) has been introduced in \cite{GeScPi19}. In this work, regularization is introduced via subtracting a strongly convex function from the one-stage reward function. This type of  modifications is in general applied to reinforcement learning algorithms to ensure robust learners with improved exploration. We refer the reader to \cite{GeScPi19} for comprehensive review on a variety of regularized MDPs used in the literature.

Analogous to regularized MDPs, in this section, we introduce regularized mean-field games. To that end, let $\Omega: \sU \rightarrow \R$ be a differentiable $\rho$-strongly convex function with respect to the $l_1$-norm $\|\cdot\|_1$ (see Appendix~\ref{dualitysec} for definition). Let $L_{\reg}$ be the Lipschitz
constant of $\Omega$ on $\sU$, whose existence is guaranteed by strong convexity of $\Omega$. The only difference between classical MFGs and regularized ones is the regularization term in the one-stage reward function. In regularized MFGs, the reward function is given by 
$$
R^{\reg}(x,u,\mu) \coloneqq R(x,u,\mu) - \Omega(u). 
$$
A typical example for $\Omega$ is the negative entropy $\Omega(u) = \sum_{a \in \sA} \ln(u(a)) \, u(a)$. Another similar example is the relative entropy between $u$ and uniform distribution; that is, $\Omega(u) = \sum_{a \in \sA} \ln(u(a)) \, u(a) + \ln(|\sA|)$. In both of these examples, as a result of entropy regularization, agent visits optimal as well as almost optimal actions more often and randomly. This improves the exploration of the algorithm. Moreover, due to strong convexity of $\Omega$, Lipschitz sensitivity of the optimal action on state, state-measure, and other uncertain parameters can be established via Legendre-Fenchel duality. This makes the learning algorithm more robust. This is indeed the main motivation here for introducing the regularization term.

Now, it is time to define the optimality notion that is adapted in this paper. To this end, we first define the regularized discounted cost of any policy given any state measure.  

In regularized MFGs, for a fixed $\mu$, the reward function of any policy $\pi$ is given by
\begin{align}
J^{\reg}_{\mu}(\pi,x) &= E^{\pi}\biggl[ \sum_{t=0}^{\infty} \beta^t R^{\reg}(x(t),u(t),\mu) \biggr], \nonumber 
\end{align}
where $\beta \in (0,1)$ is the discount factor and $x$ is the initial state. For this model, we define the set-valued mapping $\Psi^{\reg} : \Pnew(\sX) \rightarrow 2^{\Pi}$  as follows (here, $2^{\Pi}$ is the collection of all subsets of $\Pi$):
$$\Psi^{\reg}(\mu) = \{\hat{\pi} \in \Pi: J^{\reg}_{\mu}(\hat{\pi},x) = \sup_{\pi} J^{\reg}_{\mu}(\pi,x) \text{ }\text{ for all } \text{ } x \in \sX\}.$$
The set $\Psi^{\reg}(\mu)$ is the set of optimal policies for $\mu$.
Similarly, we define the set-valued mapping $\Lambda^{\reg} : \Pi \to 2^{\Pnew(\sX)}$ as follows: for any $\pi \in \Pi$, the state-measure $\mu_{\pi} \in \Lambda^{\reg}(\pi)$ is an invariant distribution of the transition probability $P(\,\cdot\,|x,\pi(x),\mu_{\pi})$; that is, 
\begin{align}
\mu_{\pi}(\,\cdot\,) = \sum_{x \in \sX} P(\,\cdot\,|x,\pi(x),\mu_{\pi})  \, \mu_{\pi}(x). \nonumber
\end{align}
Under Assumption~\ref{as1} and Proposition~\ref{new-con}, $\Lambda^{\reg}(\pi)$ is always nonempty. This can be established via Kakutani's fixed point theorem (see \cite[Lemma 3]{AnKaSa19-b}). Then, the notion of equilibrium for this regularized game model is defined as follows.

\begin{definition}
	A pair $(\pi_*,\mu_*) \in \Pi \times \Pnew(\sX)$ is a \emph{regularized mean-field equilibrium} if $\pi_* \in \Psi^{\reg}(\mu_*)$ and $\mu_* \in \Lambda^{\reg}(\pi_*)$.
\end{definition}

In this paper, our goal is to develop a Q-learning algorithm for computing an approximate regularized mean-field equilibrium when the model is unknown; that is the transition probability $P$ and the one-stage reward function $R$ are not available to the decision maker. To that end, we define the following.

\begin{definition}\label{approxpol}
		Let $(\pi_*,\mu_*) \in \Pi \times \Pnew(\sX)$ be a \emph{regularized mean-field equilibrium}. A policy $\pi_{\varepsilon} \in \Pi$ is an $\varepsilon$-regularized-mean-field equilibrium policy if
		$$
		\sup_{x \in \sX} \|\pi_{\varepsilon}(x) - \pi_*(x)\|_1 \leq \varepsilon.
		$$
\end{definition}

In the next section, we will first introduce a mean-field equilibrium (MFE) operator, which can be used to compute mean-field equilibrium when the model is known, and prove that this operator is contractive. Then, under model-free setting, we approximate this MFE operator with a random one and establish a learning algorithm. Using this random operator, we obtain $\varepsilon$-regularized-mean-field equilibrium policy with high confidence. This learned approximate regularized-mean-field equilibrium policy can then be used in finite-agent game model as an approximate Nash equilibrium.

\section{Mean-Field Equilibrium Operator}\label{known-model}

In this section, we introduce a mean-field equilibrium (MFE) operator, whose fixed point is a mean-field equilibrium. We prove that this operator is contractive. Using this result, we then establish a Q-learning algorithm to obtain approximate regularized mean-field equilibrium policy. To that end, in addition to Assumption~\ref{as1}, we assume the following. This assumption ensures that the MFE operator is contractive. 

\begin{assumption}\label{as2}
		We assume that 
		$$\frac{3 \, K_1}{2} \, \left( 1 + \frac{1}{\rho} \, \frac{K_{\Lip}}{1-\beta}\right) < 1,$$
		where 
		$$K_{\Lip} \coloneqq \frac{L_1}{1-\beta\, K_1/2} > 0.$$
\end{assumption}

Recall that given any state-measure $\mu$, the regularized value function $J^{\reg}_{\mu}$ of policy $\pi$ with initial state $x$ is defined as
$$
J^{\reg}_{\mu}(\pi,x) = E^{\pi}\biggl[ \sum_{t=0}^{\infty} \beta^t R^{\reg}(x(t),u(t),\mu) \, \bigg| \, x(0) = x \biggr]. 
$$
Then, the optimal regularized value function is given by 
$$J^{\reg,*}_{\mu}(x) \coloneqq \sup_{\pi \in \Pi} J^{\reg}_{\mu}(\pi,x).$$ 
Similarly, we define the optimal regularized $Q$-function as
$$
Q^{\reg,*}_{\mu}(x,u) = R^{\reg}(x,u,\mu) + \beta \sum_{y \in \sX} J^{\reg,*}_{\mu}(y) \, P(y|x,u,\mu). 
$$
Note that $Q^{\reg,*}_{\mu,\max}(x) \coloneqq \sup_{u \in \sU} Q^{\reg,*}_{\mu}(x,u) = J_{\mu}^{\reg,*}(x)$ for all $x \in \sX$. Therefore, we have the following optimality equation:
\begin{align}
Q^{\reg,*}_{\mu}(x,u) &= R^{\reg}(x,u,\mu) + \beta \sum_{y \in \sX} Q^{\reg,*}_{\mu,\max}(y) \, P(y|x,u,\mu)  \nonumber \\
&\eqqcolon L_{\mu}Q^{\reg,*}_{\mu}(x,u). \nonumber 
\end{align}
It is also a well-known fact that $Q^{\reg,*}_{\mu,\max}$ satisfies the following Bellman optimality equation:
	\begin{align}
	Q^{\reg,*}_{\mu,\max}(x) 
	&= \sup_{u} \left[ R^{\reg}(x,u,\mu) + \beta \sum_{y \in \sX} Q^{\reg,*}_{\mu,\max}(y) \, P(y|x,u,\mu) \right]  \nonumber \\
	&\eqqcolon T_{\mu}Q^{\reg,*}_{\mu,\max}(x). \nonumber
	\end{align}
Here, $L_{\mu}$ and $T_{\mu}$ are $\|\cdot\|_{\infty}$-contractions with contraction factor $\beta$, and the unique fixed point of $L_{\mu}$ is $Q^{\reg,*}_{\mu}$ and the unique fixed point of $T_{\mu}$ is $Q^{\reg,*}_{\mu,\max}$. 

Let ${\mathcal C}$ denote the set of all $Q$-functions satisfying the following properties: any $Q \in {\mathcal C}$ is uniformly $\left(K_{\Lip}+L_{\reg}\right)$- Lipschitz continuous and $\rho$-strongly concave with respect to $u$. We endow ${\mathcal C}$ with the sup-norm $\|\cdot\|_{\infty}$ throughout the paper. 

\begin{lemma}\label{lip-value}
		For any $\mu$, $Q^{\reg,*}_{\mu,\max}$ is $K_{\Lip}$-Lipschitz continuous; that is, 
		$$
		|Q^{\reg,*}_{\mu,\max}(x)-Q^{\reg,*}_{\mu,\max}(y)| \leq K_{\Lip} \, 1_{\{x \neq y\}}.
		$$
\end{lemma}
	
\begin{proof}
The proof is in Appendix~\ref{app01}.\qed		 
\end{proof}

Now, we define the MFE operator. To that end, we define $H_1: \Pnew(\sX) \rightarrow {\mathcal C}$ as $H_1(\mu) = Q^{\reg,*}_{\mu}$ (optimal regularized Q-function) and $H_2: \Pnew(\sX) \times {\mathcal C} \rightarrow \Pnew(\sX)$ as
$$
H_2(\mu,Q)(\cdot) \coloneqq \sum_{x \in \sX} P(\cdot|x,f_{Q}(x),\mu) \, \mu(x),
$$
where $f_{Q}(x) = \argmax_{u\in\sU} Q(x,u)$ for all $x \in \sX$. With these definitions, we can give the definition of the optimality operator as follows:
$$H: \Pnew(\sX) \ni \mu \mapsto H_2\left(\mu,H_1(\mu)\right) \in \Pnew(\sX).$$ 
Our goal is to prove that $H$ is contractive. In the following lemma, we prove that $H_1$ is Lipschitz, which will be used to prove that operator $H$ is contractive.  

\begin{lemma}\label{n-lemma1}
	The mapping $H_1$ is a Lipschitz continuous with the Lipschitz constant $\displaystyle K_{H_1} \coloneqq \frac{K_{\Lip}}{1-\beta}$.
\end{lemma}
	
\proof
The proof is in Appendix~\ref{app02}\qed
\endproof

Before we prove that $H$ is contractive, we establish that for any mean-field term, the optimal policy is Lipschitz continuous with respect to the mean-field term.

\begin{lemma}\label{Lips-pol}
	For any $\mu,\hmu$, let $f_{\mu}$ and $f_{\hmu}$ denote the corresponding optimal policies. Then, it follows that
	\begin{align}
		\|f_{\mu}(x)-f_{\hmu}(\hx)\|_1 \leq \frac{1}{\rho} \, K_{H_1} (1_{\{x \neq \hx\}} + \|\mu-\hmu\|_1), \nonumber
	\end{align}
	for all $x,\hx$. 
\end{lemma}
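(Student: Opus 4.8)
The plan is to exploit the fact that the map $F:(x,v,\mu,u)\mapsto R(x,u,\mu)+\beta\sum_y v(y)\,P(y|x,u,\mu)$ is \emph{linear} in $u$, since both $R(x,u,\mu)=\sum_a r(x,a,\mu)\,u(a)$ and $P(y|x,u,\mu)=\sum_a p(y|x,a,\mu)\,u(a)$ are linear in $u$. Introducing the linear coefficient vector $w_\mu(x)\in\R^{\sA}$ with entries $w_\mu(x)(a)\coloneqq r(x,a,\mu)+\beta\sum_y Q^{\reg,*}_{\mu,\max}(y)\,p(y|x,a,\mu)$, the optimal policy can be written as $f_\mu(x)=\argmax_{u\in\sU}\big[\langle u,w_\mu(x)\rangle-\Omega(u)\big]$. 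Extending $\Omega$ by $+\infty$ off $\sU$ (so that its domain $S$ equals $\sU$ and the $\argmax$ ranges exactly over $S$), part~2 of Proposition~\ref{duality} identifies this $\argmax$ as a gradient of the Fenchel conjugate, namely $f_\mu(x)=\nabla\Omega^*(w_\mu(x))$. This is the step where the regularization does all the work: without the strongly convex $\Omega$ there is no reason for the $\argmax$ to be single-valued, let alone Lipschitz.

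Next I would feed this representation into part~3 of Proposition~\ref{duality}. Since $\Omega$ is $\rho$-strongly convex with respect to $\|\cdot\|_1$, whose dual norm is $\|\cdot\|_\infty$, the conjugate $\Omega^*$ is $\tfrac{1}{\rho}$-smooth, so that
\[
\|f_\mu(x)-f_{\hmu}(\hx)\|_1=\|\nabla\Omega^*(w_\mu(x))-\nabla\Omega^*(w_{\hmu}(\hx))\|_1\le \tfrac{1}{\rho}\,\|w_\mu(x)-w_{\hmu}(\hx)\|_\infty .
\]
The whole lemma thus reduces to the scalar estimate $\|w_\mu(x)-w_{\hmu}(\hx)\|_\infty\le K_{H_1}(1_{\{x\neq\hx\}}+\|\mu-\hmu\|_1)$, i.e. to bounding $|w_\mu(x)(a)-w_{\hmu}(\hx)(a)|$ uniformly in $a$.

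For that estimate I would split the difference of coefficients into a reward part and a transition part. The reward part is controlled by Assumption~\ref{as1}(a) (with the action indicator vanishing, since both entries use the same $a$) by $L_1(1_{\{x\neq\hx\}}+\|\mu-\hmu\|_1)$. For the transition part I would add and subtract $\sum_y Q^{\reg,*}_{\mu,\max}(y)\,p(y|\hx,a,\hmu)$: the first resulting piece is handled by (\ref{tv-bound}) together with Assumption~\ref{as1}(b) and the $Q_{\Lip}$-Lipschitz continuity of $Q^{\reg,*}_{\mu,\max}$ from Lemma~\ref{lip-value}, contributing $\tfrac{\beta K_1 Q_{\Lip}}{2}(1_{\{x\neq\hx\}}+\|\mu-\hmu\|_1)$; the second piece is at most $\|Q^{\reg,*}_{\mu,\max}-Q^{\reg,*}_{\hmu,\max}\|_\infty$, which by nonexpansiveness of the $\sup$ and Lemma~\ref{n-lemma1} is at most $K_{H_1}\|\mu-\hmu\|_1$. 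Summing, and using $Q_{\Lip}=L_1/(1-\beta K_1/2)$, the coefficient of $(1_{\{x\neq\hx\}}+\|\mu-\hmu\|_1)$ collapses to $L_1+\tfrac{\beta K_1 Q_{\Lip}}{2}=Q_{\Lip}$; then $Q_{\Lip}=(1-\beta)K_{H_1}$ together with $Q_{\Lip}+\beta K_{H_1}=K_{H_1}$ yields $Q_{\Lip}\,1_{\{x\neq\hx\}}+K_{H_1}\|\mu-\hmu\|_1\le K_{H_1}(1_{\{x\neq\hx\}}+\|\mu-\hmu\|_1)$, which is the claimed bound.

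I expect the only genuinely delicate point to be the clean application of Proposition~\ref{duality}: one must verify that after extending $\Omega$ by $+\infty$ the hypotheses (differentiable $\rho$-strong convexity with domain $\sU$) still hold, so that $f_\mu$ is \emph{exactly} $\nabla\Omega^*$ evaluated at the linear coefficient vector. This is precisely what converts the otherwise intractable sensitivity of an $\argmax$ into a Lipschitz estimate via the smoothness bound (part~3). The remaining arithmetic — the cancellations turning $L_1+\tfrac{\beta K_1 Q_{\Lip}}{2}$ and $Q_{\Lip}+\beta K_{H_1}$ into $Q_{\Lip}$ and $K_{H_1}$ — is routine but must be carried out carefully, since it is what makes the final constant come out to exactly $K_{H_1}/\rho$.
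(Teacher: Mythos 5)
Your proposal is correct and follows essentially the same route as the paper's proof: represent $Q^{\reg,*}_{\mu}(x,u)=\langle q_x^{\mu},u\rangle-\Omega(u)$, identify $f_\mu(x)$ via part~2 of Proposition~\ref{duality} as a gradient of the conjugate, apply the $\tfrac{1}{\rho}$-smoothness from part~3 to reduce everything to $\|q_x^{\mu}-q_{\hx}^{\hmu}\|_\infty$, and then bound that by splitting off the reward term and using (\ref{tv-bound}), Lemma~\ref{lip-value}, and Lemma~\ref{n-lemma1} with the identities $L_1+\beta K_1 Q_{\Lip}/2=Q_{\Lip}$ and $Q_{\Lip}+\beta K_{H_1}=K_{H_1}$. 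The only differences are cosmetic — you add and subtract $\sum_y Q^{\reg,*}_{\mu,\max}(y)p(y|\hx,a,\hmu)$ where the paper uses the mirrored intermediate term, and you are in fact more careful than the paper about extending $\Omega$ by $+\infty$ so that Proposition~\ref{duality} applies with domain exactly $\sU$.
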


\begin{proof}
The proof is in Appendix~\ref{app03}	\qed
\end{proof}

\begin{remark}
In the absence of the regularization term, one can establish the Lipschitz continuity of the optimal policy with respect to the mean-field term if it is assumed that the following function
$$
F: (x,v,\mu,u) \mapsto R(x,u,\mu) + \beta \sum_y v(y) \, P(y|x,u,\mu)
$$
is strongly concave with respect to $u$ and has a Lipschitz continuous gradient in $u$ with respect to $x,v,\mu$, which are in general restrictive conditions. Indeed, these conditions were imposed in our previous work \cite[Assumption 2.1(d)]{AnKaSa19-b} on learning unregularized mean-field games. As a result of these restrictive conditions, the analysis of the convergence of the algorithm is much more involved. Therefore, introducing a regularization term into the one-stage reward function significantly relaxes these conditions on the system components and simplifies the analysis. Moreover, because of
the Lipschitz sensitivity of the optimal policy, the algorithm is supposed to be more robust to the uncertainties in the environment.  
	
Note that in classical algorithms developed for MDPs, such as $Q$-learning, value iteration, and policy iteration, it is not required to establish the Lipschitz continuity of the optimal policy. However, in mean-field games, since the optimal policy $f_{\mu}$ directly affects the behaviour of the next mean-field term through
$$
	H_2(\mu,Q_{\mu}^{\reg,*})(\cdot) = \sum_x P(\cdot|x,f_{\mu}(x),\mu) \, \mu(x),
$$
one must also establish the Lipschitz continuity of the optimal policy $f_{\mu}$ in mean-field games. This is indeed the most challenging part in the analysis compared to the analysis of the algorithms developed for MDPs.
\end{remark}

Now, we can prove using Lemma~\ref{n-lemma1} and Lemma~\ref{Lips-pol}, that $H$ is contractive. 

\begin{proposition}\label{MFE-con}
	The mapping $H$ is a contraction with the contraction constant $K_{H}$, where
	$$
	K_H \coloneqq \frac{3 \, K_1}{2} \, \left( 1 + \frac{ K_{H_1}}{\rho} \right). 
	$$
\end{proposition}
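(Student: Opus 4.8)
The plan is to unfold the definition $H(\mu)(\cdot) = H_2\bigl(\mu,H_1(\mu)\bigr)(\cdot) = \sum_{x} P(\cdot|x,f_{\mu}(x),\mu)\,\mu(x)$, where $f_{\mu} \coloneqq f_{H_1(\mu)}$ is the optimal policy for $\mu$, and to bound $\|H(\mu)-H(\hmu)\|_1$ by splitting the difference into a piece in which only the transition kernels differ (with the outer weight held at $\mu$) and a piece in which the kernel is frozen at the $\hmu$-data while the outer weight is replaced by the signed measure $\mu-\hmu$. Concretely, I would write
\begin{align}
H(\mu) - H(\hmu) &= \sum_x \bigl[P(\cdot|x,f_{\mu}(x),\mu) - P(\cdot|x,f_{\hmu}(x),\hmu)\bigr]\mu(x) \nonumber \\
&\quad + \sum_x P(\cdot|x,f_{\hmu}(x),\hmu)\,\bigl[\mu(x)-\hmu(x)\bigr] \nonumber
\end{align}
and denote the two sums by $T_1$ and $T_2$.

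For $T_1$ I would pull the $l_1$-norm inside the convex combination $\sum_x(\cdot)\,\mu(x)$ and apply the kernel Lipschitz bound of Proposition~\ref{new-con} with identical state arguments, so that its state indicator vanishes, giving $\|P(\cdot|x,f_{\mu}(x),\mu)-P(\cdot|x,f_{\hmu}(x),\hmu)\|_1 \le K_1\bigl(\|f_{\mu}(x)-f_{\hmu}(x)\|_1 + \|\mu-\hmu\|_1\bigr)$. Lemma~\ref{Lips-pol}, applied with $\hx=x$ (so its state indicator also vanishes), then controls the policy gap by $\tfrac{K_{H_1}}{\rho}\|\mu-\hmu\|_1$. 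Since $\sum_x \mu(x)=1$, this yields $\|T_1\|_1 \le K_1\bigl(1+\tfrac{K_{H_1}}{\rho}\bigr)\|\mu-\hmu\|_1$.

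The delicate step, and the one I expect to be the main obstacle, is $T_2$. The naive bound $\bigl\|\sum_x \nu_x(\mu(x)-\hmu(x))\bigr\|_1 \le \|\mu-\hmu\|_1$, valid because each $\nu_x = P(\cdot|x,f_{\hmu}(x),\hmu)$ is a probability measure, produces the constant $1 + K_1\bigl(1+\tfrac{K_{H_1}}{\rho}\bigr)$, which is \emph{too large} and fails to match the claimed $K_H$. To recover the factor $\tfrac{3K_1}{2}$ one must exploit the Lipschitz continuity of the composed kernel in the state variable rather than discarding it. I would therefore treat $T_2$ as $2\,\|\mu W - \hmu W\|_{TV}$ with the stochastic kernel $W(\cdot|x)\coloneqq P(\cdot|x,f_{\hmu}(x),\hmu)$, and bound the pushforward distance through an optimal coupling $\gamma$ of $\mu$ and $\hmu$, exactly as in the proof of Proposition~\ref{new-con}: $\|\mu W-\hmu W\|_{TV} \le \sum_{x,\hx}\|W(\cdot|x)-W(\cdot|\hx)\|_{TV}\,\gamma(x,\hx)$. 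The integrand now compares the \emph{same} kernel at two states under the \emph{same} mean-field term $\hmu$, so Proposition~\ref{new-con} together with Lemma~\ref{Lips-pol} (both instantiated at $\mu=\hmu$) gives $\|W(\cdot|x)-W(\cdot|\hx)\|_1 \le K_1\bigl(1+\tfrac{K_{H_1}}{\rho}\bigr)1_{\{x\neq\hx\}}$. Summing against $\gamma$, using $\sum_{x,\hx}1_{\{x\neq\hx\}}\gamma(x,\hx) = \|\mu-\hmu\|_{TV}$ and $\|\cdot\|_1 = 2\|\cdot\|_{TV}$, yields $\|T_2\|_1 \le \tfrac{K_1}{2}\bigl(1+\tfrac{K_{H_1}}{\rho}\bigr)\|\mu-\hmu\|_1$. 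Adding the two contributions gives precisely $\|H(\mu)-H(\hmu)\|_1 \le \tfrac{3K_1}{2}\bigl(1+\tfrac{K_{H_1}}{\rho}\bigr)\|\mu-\hmu\|_1 = K_H\,\|\mu-\hmu\|_1$, and Assumption~\ref{as2} guarantees $K_H<1$, so $H$ is a contraction.
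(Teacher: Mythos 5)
Your proof is correct and follows essentially the same route as the paper: the identical two-term decomposition, the identical bound on the kernel-difference term via Proposition~\ref{new-con} and Lemma~\ref{Lips-pol} (with matching state arguments so the indicators vanish), and the identical $\tfrac{K_1}{2}\bigl(1+\tfrac{K_{H_1}}{\rho}\bigr)\|\mu-\hmu\|_1$ bound on the pushforward term $T_2$. The only difference is that where the paper justifies that last step (its inequality (I)) by citing \cite[Lemma A2]{KoRa08}, you prove the needed Dobrushin-type contraction directly through an optimal coupling of $\mu$ and $\hmu$ — the same device the paper itself deploys inside the proof of Proposition~\ref{new-con} — so your argument is simply a self-contained rendering of the paper's proof.
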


\proof
Now, fix any $\mu, \hmu \in \Pnew(\sX)$. Using Lemma~\ref{Lips-pol}, we have
\begin{align}
&\|H_2(\mu,H_1(\mu)) - H_2(\hmu,H_1(\hmu))\|_1 \nonumber \\
&= \sum_{y} \, \bigg| \sum_{x} \, P(y|x,f_{\mu}(x),\mu) \, \mu(x) - \sum_{x} \, P(y|x,f_{\hmu}(x),\hmu) \, \hmu(x) \biggr| \nonumber \\
&\leq \sum_{y} \, \bigg| \sum_{x} \, P(y|x,f_{\mu}(x),\mu) \, \mu(x) - \sum_{x} \, P(y|x,f_{\hmu}(x),\hmu) \, \mu(x) \biggr| \nonumber \\
&\phantom{xx}+ \sum_{y} \, \bigg| \sum_{x} \,P(y|x,f_{\hmu}(x),\hmu) \, \mu(x) - \sum_{x} \, P(y|x,f_{\hmu}(x),\hmu) \, \hmu(x) \biggr| \nonumber \\
&\overset{(I)}{\leq} \sum_{x} \left\|P(\cdot|x,f_{\mu}(x),\mu)-P(\cdot|x,f_{\hmu}(x),\hmu) \right\|_1 \mu(x) + \frac{K_1}{2} \left( 1 + \frac{K_{H_1}}{\rho} \right) \, \|\mu-\hmu\|_1 \nonumber \\
&\leq K_1 \left( \sup_x \|f_{\mu}(x)-f_{\hmu}(x)\|_1 + \|\mu-\hmu\|_1 \right) + \frac{K_1}{2} \left( 1 + \frac{K_{H_1}}{\rho} \right) \, \|\mu-\hmu\|_1 \nonumber \\
&\leq  \frac{3 \, K_1}{2} \, \left( 1 + \frac{K_{H_1}}{\rho} \right) \, \|\mu-\hmu\|_1.
\end{align}
Note that Lemma~\ref{Lips-pol} and Proposition~\ref{new-con} lead to
\begin{equation*}
\|P(\cdot|x,f_{\hmu}(x),\hmu)-P(\cdot|y,f_{\hmu}(y),\hmu)\|_1 \leq K_1 \left( 1 + \frac{K_{H_1}}{\rho} \right) \, 1_{\{x \neq y\}}.
\end{equation*}
Hence, (I) follows from \cite[Lemma A2]{KoRa08}. This completes the proof. \qed
\endproof

Under Assumption~\ref{as1} and Assumption~\ref{as2}, $H$ is a contraction mapping. Therefore, by Banach Fixed Point Theorem, $H$ has an unique fixed point. Let $\mu_*$ be this unique fixed point and $Q_{\mu_*}^{\reg,*} = H_1(\mu_*)$. Let $\pi_*(x) = f_{\mu_*}(x)$. Then, one can prove that the pair $(\pi_*,\mu_*)$ is a regularized mean-field equilibrium. Hence, we can compute this regularized mean-field equilibrium via applying $H$ recursively starting from arbitrary $\mu_0$. This indeed leads to a value iteration algorithm for computing mean-field equilibrium. However, if the model is unknown; that is the transition probability $P$ and the one-stage reward function $R$ are not available to the decision maker, we replace $H$ with a random operator and establish a learning algorithm via this random operator. To prove the convergence of this learning algorithm, the contraction property of $H$ is crucial.

\section{Finite Agent Game}\label{sec2}

The regularized mean-field game model in Section~\ref{mfgs} is indeed the infinite-population limit of the regularized finite-agent game model that will be described below. In a finite-agent game model, we have $N$-agents and, for each agent $i \in \{1,2,\ldots,N\}$, $x^N_i(t) \in \sX$ and $u^N_i(t) \in \sU$ denote the state and the action of Agent~$i$ at time $t$, respectively. 
The empirical distribution of the states of agents at time $t$ is defined as follows: 
\begin{align}
e_t^{(N)}(\,\cdot\,) \coloneqq \frac{1}{N} \sum_{i=1}^N \delta_{x_i^N(t)}(\,\cdot\,) \in \Pnew(\sX). \nonumber
\end{align}
This empirical distribution affects both the system dynamics and one-stage reward function. Therefore, for each $t \ge 0$, next states $(x^N_1(t+1),\ldots,x^N_N(t+1))$ of agents have the following conditional distribution given current states $(x^N_1(t),\ldots,x^N_N(t))$ and actions $(u^N_1(t),\ldots,u^N_N(t))$:
\begin{align}
&\prod^N_{i=1} P\big(x^N_i(t+1)\big|x^N_i(t),u^N_i(t),e^{(N)}_t\big). \nonumber 
\end{align}
A \emph{policy} $\pi$ for a generic agent is a deterministic function from $\sX$ to $\sU$. The set of all policies for Agent~$i$ is denoted by $\Pi_i$. The initial states $x^N_i(0)$ are independent and identically distributed according to $\mu_0$.

Let ${\boldsymbol \pi}^{(N)} \coloneqq (\pi^1,\ldots,\pi^N)$, $\pi^i \in \Pi_i$, denote an $N$-tuple of policies. Under such an $N$-tuple of policies, the regularized discounted reward of Agent~$i$ is defined as
\begin{align}
J_i^{(N)}({\boldsymbol \pi}^{(N)}) &= E^{{\boldsymbol \pi}^{(N)}}\biggl[\sum_{t=0}^{\infty}\beta^{t}R^{\reg}(x_{i}^N(t),u_{i}^N(t),e^{(N)}_t)\biggr]. \nonumber 
\end{align}
Then, the goal of the agents is to achieve a Nash equilibrium, which is defined as follows.

\begin{definition}
	An $N$-tuple of policies ${\boldsymbol \pi}^{(N*)}= (\pi^{1*},\ldots,\pi^{N*})$ is a \emph{Nash equilibrium} if
	\begin{align}
	J_i^{(N)}({\boldsymbol \pi}^{(N*)}) = \sup_{\pi^i \in \Pi_i} J_i^{(N)}({\boldsymbol \pi}^{(N*)}_{-i},\pi^i) \nonumber
	\end{align}
	for each $i=1,\ldots,N$, where ${\boldsymbol \pi}^{(N*)}_{-i} \coloneqq (\pi^{j*})_{j\neq i}$.
\end{definition}

It is known that establishing the existence of Nash equilibria and computing it are in general prohibitive for finite-agent game model as a result of the decentralized nature of the problem (see \cite[pp. 4259]{SaBaRaSIAM}). Therefore, it is of interest to obtain an approximate Nash equilibrium, whose definition is given below.

\begin{definition}
	An $N$-tuple of policies ${\boldsymbol \pi}^{(N*)}= (\pi^{1*},\ldots,\pi^{N*})$ constitutes an \emph{$\delta$-Nash equilibrium} if
	\begin{align}
	J_i^{(N)}({\boldsymbol \pi}^{(N*)}) \geq \sup_{\pi^i \in \Pi_i} J_i^{(N)}({\boldsymbol \pi}^{(N*)}_{-i},\pi^i) - \delta \nonumber
	\end{align}
	for each $i=1,\ldots,N$, where ${\boldsymbol \pi}^{(N*)}_{-i} \coloneqq (\pi^{j*})_{j\neq i}$.
\end{definition}

Due to symmetry in mean-field game model, if the number of agents is large enough, one can obtain approximate Nash equilibrium by studying the infinite population limit $N\rightarrow\infty$ of the game (i.e., mean-field game model in Section~\ref{mfgs}). Indeed, one can prove that if each agent in the finite-agent game model adopts the $\varepsilon$-regularized-mean-field equilibrium policy in Definition~\ref{approxpol} of the infinite population limit, the resulting policy will be an approximate Nash equilibrium for all sufficiently large $N$-agent game models. Indeed, this is the statement of the below theorem.

Before we state the theorem, let us define the following constants:
\small
\begin{align}
	&C_1 \coloneqq \left(\frac{3 \, K_1}{2}  + \frac{K_1 \, K_{H_1}}{2\rho} \right), \, C_2 \coloneqq \left(L_1+\frac{\beta K_1 K_{\Lip}}{2}\right) \frac{K_1}{1-C_1} \nonumber \\
	&C_3 \coloneqq \left(L_1+L_{\reg}+\frac{\beta K_1 K_{\Lip}}{2}\right).\nonumber
\end{align}
\normalsize
Note that by Assumption~\ref{as2}, the constant $C_1$ is strictly less than $1$.
	
\begin{theorem}\label{old-main-cor}
	Let $\pi_{\varepsilon}$ be an $\varepsilon$-regularized-mean-field equilibrium policy for the mean-field equilibrium $(\pi_*,\mu_*)$. Let $\mu_0 \in \Lambda^{\reg}(\pi_{\varepsilon})$. Then, for any $\delta>0$, there exists a positive integer $N(\delta)$, such that, for each $N\geq N(\delta)$, the $N$-tuple of policies ${\boldsymbol \pi}^{(N)} = \{\pi_{\varepsilon},\pi_{\varepsilon},\ldots,\pi_{\varepsilon}\}$ is an $(\tau \, \varepsilon + \delta)$-Nash equilibrium for the game with $N$ agents, where 
	$
		\tau \coloneqq \frac{2C_2+C_3}{1-\beta}.
	$
\end{theorem}
	
\proof
The proof is in Appendix~\ref{app1}.\qed
\endproof


In the next section, we develop an algorithm for learning $\varepsilon$-regularized-mean-field equilibrium policy via fitted Q-iteration and empirical estimation of the transition probability. Therefore, if each agent in the finite-agent game model adopts this learned policy, then the resulting policy will be an approximate Nash equilibrium for finite-agent setup.

\section{Q-Learning Algorithm}\label{unknown-model}

In this section, we establish an offline learning algorithm  for obtaining approximate regularized mean-field equilibrium. We suppose that a generic agent has access to a simulator, which generates a new state $y \sim P(\,\cdot\,|x,u,\mu)$ and gives the reward $R(x,u,\mu)$ for any given state $x$, action $u$, and state measure $\mu$. This is a typical assumption in offline reinforcement learning algorithms.

In this learning algorithm, we replace operators $H_1$ and $H_2$ with random operators $\hat{H}_1$ and $\hat{H}_2$, respectively. Therefore, we have two stages in each iteration of the learning algorithm. In the first stage, the optimal regularized Q-function $Q_{\mu}^{\reg,*}$ for a given $\mu$ is learned via fitted Q-learning algorithm, which has been introduced in \cite{AnMuSz07} to learn optimal Q-functions of Markov decision processes. This stage replaces the operator $H_1$ with a random operator $\hat{H}_1$. In this fitted Q-learning algorithm, Q-functions are picked from a fixed function class ${\mathcal F}$. This function class ${\mathcal F}$ can be chosen as the set of neural networks with some fixed architecture or linear span of some finite number of basis functions or the set ${\mathcal C}$ itself. Depending on ${\mathcal F}$, an additional representation error in the learning algorithm will be present. Let ${\mathcal F}_{\max} \coloneqq \{Q_{\max}: Q \in {\mathcal F}\}$. 

In the second stage of each iteration, the state-measure is updated via simulating corresponding transition probability. This stage replaces the operator $H_2$ with a random operator $\hat{H}_2$.

\begin{remark}
Note that this learning algorithm can be applied to finite-agent game problem as follows. First of all, we must assume that each agent has access to a simulator, which generates a new state $y \sim P(\,\cdot\,|x,u,\mu)$ and gives the reward $R(x,u,\mu)$ for any given state $x$, action $u$, and state measure $\mu$. This is a typical assumption in offline reinforcement learning algorithms. Using this simulator, each agent runs the proposed learning algorithm offline to compute an approximate regularized mean-field equilibrium policy. Agents then have to agree on learned approximate regularized mean-field equilibrium policies via running some consensus algorithm. Then the resulting joint policy will be approximate Nash equilibrium by Theorem~\ref{old-main-cor}. \end{remark}

We now proceed by giving the description of $\hat{H}_1$ first. Let $\nu$ be a probability measure on $\sX$ such that $\min_{x \in \sX} \nu(x) > 0$. Define $\xi_0 \coloneqq 1/\sqrt{\min_{x \in \sX} \nu(x) }$. We fix some function $\pi_b: \sX \rightarrow \Pnew(\sU)$ such that, for any $x \in \sX$, the distribution $\pi_b(x)(\cdot) $ on $\sU$ has a density with respect to Lebesgue measure $m$. With an abuse of notation, we denote this density with $\pi_b(x,u)$. We assume that $\pi_0 \coloneqq \inf_{(x,u) \in \sX\times\sU} \pi_b(x,u) > 0$. Now, we can give the definition of the random operator $\hat{H}_1$.

\begin{algorithm}[h]
	\caption{Algorithm $\hat{H}_1$}
	\label{H1}
	\begin{algorithmic}
		\STATE{Inputs $\left([N,L],\mu\right)$}
		\STATE{generate i.i.d. samples $\{(x_t,u_t,r_t,y_{t+1})_{t=1}^N\}$ using
			\begin{displaymath}
			x_t \sim \nu, \, u_t \sim \pi_b(x_t)(\cdot), \, r_t = R^{\reg}(x_t,u_t,\mu), \, y_{t+1} \sim P(\cdot|x_t,u_t,\mu) 
			\end{displaymath}
			}
		\STATE{Start with $Q_0 = 0$}
		\FOR{$l=0,\ldots,L-1$} 
        \STATE{  
			\begin{equation*}
			Q_{l+1} = \argmin_{f \in {\mathcal F}} \frac{1}{N} \sum_{t=1}^N \frac{1}{m(\sU) \, \pi_b(x_t,u_t)} \bigg| f(x_t,u_t) - \left[r_t + \beta \max_{u' \in \sU} Q_l(y_{t+1},u') \right]\bigg|^2 
			\end{equation*}
			}
	    \ENDFOR
		\RETURN{$Q_L$}
	\end{algorithmic}
\end{algorithm}

\begin{remark}
	Note that in Algorithm 1, one can alternatively use sample path $\{x_t,u_t\}_{t=1}^N$ generated by the policy $\pi_b$ instead of using i.i.d. samples. In this case, under $\pi_b$, the state process $\{x_t\}$ should assumed to be strictly stationary and exponentially $\beta$-mixing \cite{AnMuSz07}. Since exponentially $\beta$-mixing stationary processes forget its past exponentially fast, when there is a sufficiently large time difference between two samples, they behave like i.i.d. processes. Therefore, this makes the error analysis of the exponential $\beta$-mixing case almost the same as the i.i.d. case. However, the main problem in $\beta$-mixing case is finding a policy $\pi_b$ satisfying this mixing condition. We refer the reader to \cite{AnMuSz07-t,AnMuSz07} for the details of the error analysis of $\hat{H}_1$ in exponentially $\beta$-mixing case.  
\end{remark}

Before we describe $\hat{H}_2$, the error analysis of algorithm $\hat{H}_1$ is given. Note that there exists $\alpha > 0$ such that for any $u \in \sU$ and $\xi >0$, we have 
$
m\left( B(u,\xi) \cap \sU \right) \geq \min\left\{ \alpha \, m(B(u,\xi)), m(\sU) \right\},
$ 
where $m$ is the Lebesgue measure on $\sU$ when considered as a subset of $\R^{|\sA|-1}$ (see \cite{AnMuSz07-t}). For any $Q \in \C$, we define $v$-norm of $Q$ as
\begin{align}
	\|Q\|_v \coloneqq \left[\sum_{x} \int_{\sU} |Q(x,u)|^2 \, \frac{m(du)}{m(\sU)} \, v(x)\right]^{1/2}. \nonumber 
\end{align}
We also define $r_{\mathbf{m}} \coloneqq \sup_{(x,u,\mu) \in \sX \times \sU \times \Pnew(\sX)} |R^{\reg}(x,u,\mu)|$ and $Q_{\mathbf{m}} \coloneqq r_{\mathbf{m}}/(1-\beta)$. Using these, we need to define the following constants:
\begin{align*}
	&E({\mathcal F}) \coloneqq \sup_{\mu \in \Pnew(\sX)} \sup_{Q \in {\mathcal F}} \inf_{Q' \in {\mathcal F}} \|Q'-H_{\mu}Q\|_v  \\
	&L_{\mathbf{m}} \coloneqq (1+\beta) Q_{\mathbf{m}} + r_{\mathbf{m}}, \,\,\, C \coloneqq \frac{L_{\mathbf{m}}^2}{m(\sU) \, \pi_0}  \\
	&\Upsilon = 8 \, e^2 \, (V_{{\mathcal F}}+1) \, (V_{{\mathcal F}_{\max}}+1) \left(\frac{64 e Q_{\mathbf{m}} L_{\mathbf{m}} (1+\beta)}{m(\sU) \pi_0}\right)^{V_{{\mathcal F}}+V_{{\mathcal F}_{\max}}}  \\
	&V = V_{{\mathcal F}}+V_{{\mathcal F}_{\max}}, \,\,\, \gamma = 512 C^2  \\
	&\Delta \coloneqq \frac{1}{1-\beta} \left[ \frac{m(\sU) |\sA|! \xi_0}{\alpha (2/(K_{\Lip}+L_{\reg}))^{|\sA|-1}} \, E({\mathcal F}) \right]^{\frac{1}{|\sA|}}  \\
	&\Lambda \coloneqq \frac{1}{1-\beta} \left[ \frac{ m(\sU) |\sA|! \xi_0}{\alpha (2/(K_{\Lip}+L_{\reg}))^{|\sA|-1}} \right]^{\frac{1}{|\sA|}}.  
\end{align*}

Here $E({\cal F})$ gives the representation error of the function class ${\cal F}$. This error in general is zero or very small, since any $Q$ function in ${\cal C}$ can be approximated very well via, for instance, neural networks with some fixed architecture. Hence, we can think of the error due to $E({\cal F})$ negligible. 
The following theorem gives the error analysis of the algorithm $\hat{H}_1$.

\begin{theorem}{(\cite[Theorem 4.1]{AnKaSa19-b})}\label{Thm-H1}
	For any $(\varepsilon,\delta) \in (0,1)^2$, with probability at least $1-\delta$, we have 
	$$
	\left\|\hat{H}_1[N,L](\mu) - H_1(\mu)\right\|_{\infty} \leq \varepsilon + \Delta
	$$
	if $\frac{\beta^L}{1-\beta} \, Q_{\mathbf{m}} < \frac{\varepsilon}{2}$ and $N \geq m_1(\epsilon,\delta,L)$, where 
	$$
	m_1(\varepsilon,\delta,L) \coloneqq \frac{\gamma (2 \Lambda)^{4|\sA|}}{\varepsilon^{4|\sA|}} \, \ln\left(\frac{\Upsilon (2 \Lambda)^{2V|\sA|} L}{\delta \varepsilon^{2V|\sA|}}\right). 
	$$
	Here, the constant error $\Delta$ is as a result of the representation error $E({\mathcal F})$ in the algorithm, which is in general negligible.  
\end{theorem}

Next, we describe the random operator $\hat{H}_2$, and then, give the error analysis.   

\begin{algorithm}[h]
	\caption{Algorithm $\hat{H}_2$}
	\label{H2}
	\begin{algorithmic}
		\STATE{Inputs $\left(M,\mu,Q\right)$}
		\FOR{$x \in \sX$}
		\STATE{generate i.i.d. samples $\{y_t^x\}_{t=1}^M$ using
			$$
			y_t^x \sim P(\cdot|x,f_{Q}(x),\mu)
			$$
			and define 
			$$
			P_M(\cdot|x,f_{Q}(x),\mu) = \frac{1}{M} \sum_{t=1}^M \delta_{y_t^x}(\cdot).
			$$
		}
		\ENDFOR
		\RETURN{$\sum_{x \in \sX} P_M(\cdot|x,f_{Q}(x),\mu) \, \mu(x)$}
	\end{algorithmic}
\end{algorithm}

\begin{theorem}{(\cite[Theorem 4.2]{AnKaSa19-b})}\label{Thm-H2}
	For any $(\varepsilon,\delta) \in (0,1)^2$, with probability at least $1-\delta$
	$$
	\left\|\hat{H}_2[M](\mu,Q) - H_2(\mu,Q) \right\|_1 \leq \varepsilon 
	$$
	if $M \geq m_2(\epsilon,\delta)$, where 
	$$
	m_2(\epsilon,\delta) \coloneqq \frac{|\sX|^2}{\varepsilon^2} \, \ln\left(\frac{2 \, |\sX|^2}{\delta}\right). 
	$$
\end{theorem}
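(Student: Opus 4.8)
The plan is to view $\hat{H}_2[M](\mu,Q)$ as an empirical (Monte-Carlo) version of $H_2(\mu,Q)$ and to control the deviation coordinatewise via Hoeffding's inequality together with a union bound. First I would write the difference explicitly. Since both operators use the same weights $\mu(x)$ and the same deterministic policy $f_{Q}$, we have
\[
\hat{H}_2[M](\mu,Q)(\cdot) - H_2(\mu,Q)(\cdot) = \sum_{x \in \sX} \mu(x) \left[ P_M(\cdot|x,f_{Q}(x),\mu) - P(\cdot|x,f_{Q}(x),\mu) \right].
\]
Taking the $l_1$-norm and applying the triangle inequality in each target coordinate $y$ gives
\[
\|\hat{H}_2[M](\mu,Q) - H_2(\mu,Q)\|_1 \le \sum_{x \in \sX} \mu(x) \sum_{y \in \sX} \left| P_M(y|x,f_{Q}(x),\mu) - P(y|x,f_{Q}(x),\mu) \right|.
\]
This reduces the problem to controlling, for every source-target pair $(x,y)$, the deviation of a single empirical transition probability from its mean.

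Next I would invoke concentration. For fixed $(x,y)$, the quantity $P_M(y|x,f_{Q}(x),\mu) = \frac{1}{M}\sum_{t=1}^M 1_{\{y_t^x = y\}}$ is the empirical mean of $M$ i.i.d.\ Bernoulli random variables whose common mean is exactly $P(y|x,f_{Q}(x),\mu)$ (in Algorithm~\ref{H2} the $\{y_t^x\}$ are drawn i.i.d.\ from $P(\cdot|x,f_{Q}(x),\mu)$, and since $Q$ and $\mu$ are fixed deterministic inputs, $f_{Q}(x)$ and hence the target distribution carry no randomness). Hoeffding's inequality then yields, for any $t>0$,
\[
\Pr\left( |P_M(y|x,f_{Q}(x),\mu) - P(y|x,f_{Q}(x),\mu)| > t \right) \le 2\exp\left(-2Mt^2\right).
\]
A union bound over all $|\sX|^2$ pairs $(x,y)$ shows that, with probability at least $1 - 2|\sX|^2 \exp(-2Mt^2)$, the estimate $|P_M(y|x,f_{Q}(x),\mu) - P(y|x,f_{Q}(x),\mu)| \le t$ holds simultaneously for every pair.

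On this event I would substitute $t$ back into the $l_1$ estimate: since $\sum_x \mu(x) = 1$ while the inner sum over $y$ has $|\sX|$ terms,
\[
\|\hat{H}_2[M](\mu,Q) - H_2(\mu,Q)\|_1 \le \sum_{x}\mu(x)\sum_{y} t = |\sX|\, t.
\]
Choosing $t = \varepsilon/|\sX|$ makes the right-hand side equal to $\varepsilon$, and requiring the failure probability $2|\sX|^2 \exp(-2M(\varepsilon/|\sX|)^2)$ to be at most $\delta$ gives $M \ge \frac{|\sX|^2}{2\varepsilon^2}\ln\frac{2|\sX|^2}{\delta}$, which is implied by the stated threshold $M \ge m_2(\varepsilon,\delta)$.

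Because the estimator is an average of bounded i.i.d.\ indicators with no adaptive coupling, this argument is essentially routine rather than obstructed. The two points that warrant care are that the reduction to coordinatewise deviations is done efficiently—the weighting by $\mu$ is precisely what keeps the $x$-sum at a factor $1$ rather than $|\sX|$, so that only the $y$-sum contributes the factor $|\sX|$ that sets $t=\varepsilon/|\sX|$—and that the union bound, which requires no independence across pairs, is the correct tool. The only genuine slack is the factor of $2$ between the derived $\frac{|\sX|^2}{2\varepsilon^2}$ and the stated $\frac{|\sX|^2}{\varepsilon^2}$; this merely means $m_2(\varepsilon,\delta)$ is a valid conservative sufficient condition, which is all that an ``if $M \ge m_2$'' statement requires.
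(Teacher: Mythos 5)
Your proof is correct: the decomposition into weighted empirical transition estimates, the coordinatewise Hoeffding bound, and the union bound over the $|\sX|^2$ source--target pairs together give the claim, and your derived threshold $\frac{|\sX|^2}{2\varepsilon^2}\ln\frac{2|\sX|^2}{\delta}$ is indeed implied by the stated $m_2(\varepsilon,\delta)$, so the factor-$2$ slack is harmless. The paper itself does not reproduce a proof (it cites Theorem 4.2 of the earlier fitted Q-learning paper), but your argument is precisely the standard concentration-plus-union-bound route that the form of $m_2$ reflects, so there is nothing further to reconcile.
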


Algorithm 3 provides the overall description of the algorithm $\hat{H}$, which replaces the MFE operator $H$.     

\begin{algorithm}[H]
	\caption{Algorithm $\hat{H}$}
	\label{Qit}
	\begin{algorithmic}
		\STATE{Inputs $\left(K,\{[N_k,L_k]\}_{k=0}^{K},\{M_k\}_{k=0}^{K-1},\mu_0\right)$}
		\STATE{Start with $\mu_0$}
		\FOR{$k=0,\ldots,K-1$}
		\STATE{
			\begin{align}
			\mu_{k+1} &= \hat{H}\left([N_k,L_k],M_k\right)(\mu_k) \nonumber \\
			&\coloneqq \hat{H}_2[M_k]\left(\mu_k,\hat{H}_1[N_k,L_k](\mu_k)\right) \nonumber 
			\end{align}
		}
		\ENDFOR
		\RETURN{$\mu_K$}
	\end{algorithmic}
\end{algorithm}

Note that in Algorithm 3, for each stage $k=0,\ldots,K-1$, the input is $\mu_k$. In addition, we also pick integers $N_k$ and $L_k$ as inputs for the random operator $\hat{H_1}$ and pick integer $M_k$ as an input for the random operator $\hat{H}_2$ at each stage. We first compute an approximate $Q$-function for $\mu_k$ via $\hat{H}_1[N_k,L_k](\mu_k)$ and we compute an approximate new mean-field term via $\hat{H}_2[M_k]\left(\mu_k,\hat{H}_1[N_k,L_k](\mu_k)\right)$. In the second stage of the iteration, since we are using an approximate $Q$-function instead of the exact $Q$-function, we also have an error due to $\hat{H}_1$ in addition to the error resulting from $\hat{H_2}$.

Using above error analyses of the algorithms $\hat{H_1}$ and $\hat{H}_2$, we can now obtain the following error analysis for the algorithm $\hat{H}$. Then, the main result of this paper can be stated as a corollary of this result. 

\begin{theorem}\label{main-result}
	Fix any $(\varepsilon,\delta) \in (0,1)^2$. Define 
	$$
	\varepsilon_1 \coloneqq \frac{(1-K_H)^2 \, \varepsilon^2}{16 \, \theta \, (K_1)^2}, \,\,
	\varepsilon_2 \coloneqq \frac{(1-K_H) \, \varepsilon}{4},
	$$
	where $\theta \coloneqq \frac{4}{\rho}$. Let $K,L$ be such that 
	\begin{align}
	\frac{(K_H)^K}{1-K_H} &\leq \frac{\varepsilon}{2}, \,\, \frac{\beta^L}{1-\beta} Q_{\mathbf{m}} \leq \frac{\varepsilon_1}{2}. \nonumber 
	\end{align}
	Then, pick $N,M$ such that
	\begin{align}
	N &\geq m_1\left( \varepsilon_1,\frac{\delta}{2K},L \right), \,\, M \geq m_2\left( \varepsilon_2,\frac{\delta}{2K} \right).
	\end{align}
	Let $\mu_K$ be the output of the learning algorithm established by random operator $\hat{H}$ with inputs $$\left(K, \{[N,L]\}_{k=0}^K, \{M\}_{k=0}^{K-1}, \mu_0 \right).$$ Then, with probability at least $1-\delta$
	$$
	\|\mu_K - \mu_*\|_1 \leq \frac{K_1 \sqrt{\theta\, \Delta }}{(1-K_H)} + \varepsilon,
	$$
	where $\mu_*$ is the unique fixed point of $H$ in regularized mean-field equilibrium. 
	
\end{theorem}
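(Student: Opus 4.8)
The plan is to decompose the total error $\|\mu_K - \mu_*\|_1$ into a deterministic contraction term and a stochastic one-step-error term, and then control each separately using the contractivity of $H$ (Proposition~\ref{MFE-con}) together with the per-iteration error bounds for $\hat{H}_1$ and $\hat{H}_2$ (Theorems~\ref{Thm-H1} and \ref{Thm-H2}). Write $\mu_* = H(\mu_*)$, so that at each iteration
\[
\|\mu_{k+1} - \mu_*\|_1 \leq \|H(\mu_k) - \mu_*\|_1 + \|\hat{H}(\mu_k) - H(\mu_k)\|_1 \leq K_H \|\mu_k - \mu_*\|_1 + \eta_k,
\]
where $\eta_k \coloneqq \|\hat{H}[N,L],M](\mu_k) - H(\mu_k)\|_1$ is the one-step random-operator error. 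Iterating this recursion from $\mu_0$ gives
\[
\|\mu_K - \mu_*\|_1 \leq (K_H)^K \|\mu_0 - \mu_*\|_1 + \sum_{j=0}^{K-1} (K_H)^{K-1-j} \eta_j \leq (K_H)^K \|\mu_0 - \mu_*\|_1 + \frac{\sup_j \eta_j}{1-K_H}.
\]
Since $\mu_0,\mu_* \in \P(\sX)$ we have $\|\mu_0-\mu_*\|_1 \leq 2$, so the first term is absorbed into $\varepsilon$ via the hypothesis $(K_H)^K/(1-K_H) \leq \varepsilon/2$; it remains to bound the uniform one-step error $\sup_j \eta_j$ with high probability.

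Next I would split the one-step error through the intermediate exact second-stage operator. Abbreviating $\hat{Q}_k = \hat{H}_1[N,L](\mu_k)$ and $Q_k = H_1(\mu_k) = Q^{\reg,*}_{\mu_k}$, write
\[
\eta_k \leq \|\hat{H}_2[M](\mu_k,\hat{Q}_k) - H_2(\mu_k,\hat{Q}_k)\|_1 + \|H_2(\mu_k,\hat{Q}_k) - H_2(\mu_k,Q_k)\|_1.
\]
The first summand is controlled directly by Theorem~\ref{Thm-H2} with tolerance $\varepsilon_2$ once $M \geq m_2(\varepsilon_2,\delta/2K)$. For the second summand, the key is that $H_2(\mu,\cdot)$ depends on $Q$ only through the greedy policy $f_Q(x) = \argmax_u Q(x,u)$, and by property~2 of Proposition~\ref{duality} this maximizer is the gradient of the Fenchel conjugate of the $\rho$-strongly concave map $Q(x,\cdot)$. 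Hence property~3 (the $\tfrac{1}{\rho}$-smoothness) yields a bound of the form $\|f_{\hat{Q}_k}(x) - f_{Q_k}(x)\|_1 \leq \tfrac{1}{\rho}\|\hat{Q}_k(x,\cdot) - Q_k(x,\cdot)\|_{\text{(appropriate dual)}}$, which after passing through the Lipschitz kernel $P$ (Proposition~\ref{new-con}) and summing against $\mu_k$ gives a bound proportional to $K_1 \sqrt{\theta \|\hat{Q}_k - Q_k\|_\infty}$ with $\theta = 4/\rho$; this square-root structure is exactly why $\varepsilon_1$ is chosen to scale like $\varepsilon^2$ and where the stated coefficient $K_1\sqrt{\theta\,\Delta}/(1-K_H)$ originates.

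I would then invoke Theorem~\ref{Thm-H1} to control $\|\hat{Q}_k - Q_k\|_\infty \leq \varepsilon_1 + \Delta$ with probability at least $1-\delta/2K$ per iteration, provided $N \geq m_1(\varepsilon_1,\delta/2K,L)$ and the truncation condition $\tfrac{\beta^L}{1-\beta}Q_{\mathbf m} \leq \varepsilon_1/2$ holds. Substituting this into the policy-stability bound, the contribution of $\varepsilon_1$ propagates through the $\sqrt{\cdot}$ and the factor $1/(1-K_H)$ to combine with $\varepsilon_2$ into the $\varepsilon$-part of the final estimate, while the irreducible representation error $\Delta$ produces the residual term $K_1\sqrt{\theta\,\Delta}/(1-K_H)$. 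Finally, I would take a union bound over the $K$ iterations and over the two random operators invoked at each iteration: each of the $2K$ bad events has probability at most $\delta/2K$, so the total failure probability is at most $\delta$, giving the claim with confidence $1-\delta$. The main obstacle is the second step: establishing the stability of the greedy policy under perturbations of the $Q$-function, since one must convert a sup-norm perturbation of a strongly concave function into an $\ell_1$-perturbation of its argmax via the conjugate-duality of Proposition~\ref{duality}, and carefully track how the resulting $\sqrt{\|\hat{Q}_k-Q_k\|_\infty}$ dependence forces the quadratic relationship between $\varepsilon$ and $\varepsilon_1$.
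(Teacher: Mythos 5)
Your proof has the same skeleton as the paper's: the same error recursion $\|\mu_K-\mu_*\|_1 \le \sum_{k=0}^{K-1} K_H^{K-(k+1)}\eta_k + \|H^K(\mu_0)-\mu_*\|_1$, the same splitting of the one-step error $\eta_k$ through the intermediate point $H_2(\mu_k,\hat{Q}_k)$, the same tolerances $\varepsilon_1,\varepsilon_2$ fed into Theorems~\ref{Thm-H1} and~\ref{Thm-H2} with confidence $\delta/2K$ each, and the same union bound over $2K$ bad events. The genuine gap is precisely in the step you flag as the main obstacle: stability of the greedy policy under a sup-norm perturbation of the $Q$-function. You propose to obtain $\|f_{\hat{Q}_k}(x)-f_{Q_k}(x)\|_1 \le \tfrac{1}{\rho}\|\hat{Q}_k(x,\cdot)-Q_k(x,\cdot)\|_*$ from properties 2 and 3 of Proposition~\ref{duality}. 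That conjugate-smoothness argument requires \emph{both} $Q$-functions to have the Legendre form $\langle q_x,u\rangle-\Omega(u)$, since it controls $\|\nabla\Omega^*(q_x)-\nabla\Omega^*(\hat{q}_x)\|_1$ by $\|q_x-\hat{q}_x\|_\infty$; this is exactly the argument of Lemma~\ref{Lips-pol}, where both $Q^{\reg,*}_{\mu}$ and $Q^{\reg,*}_{\hat{\mu}}$ do have that form. But $\hat{Q}_k=\hat{H}_1[N,L](\mu_k)$ is the output of fitted $Q$-learning: a generic element of ${\mathcal F}\subset{\mathcal C}$ (e.g.\ a neural network), merely $\rho$-strongly concave in $u$, with no representation $\langle \hat{q}_x,u\rangle-\Omega(u)$. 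So property 3 does not apply to this pair. Moreover, even where it does apply, duality yields a bound \emph{linear} in the perturbation, whereas you need (and correctly anticipate) the square-root bound $\sqrt{\theta\,\|\hat{Q}_k-Q_k\|_\infty}$; you assert the square-root structure "originates" there but never derive it, and a linear bound in $\|q_x - \hat q_x\|_\infty$ does not produce it.

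The paper closes this step with a different, elementary two-point strong-concavity argument that needs nothing from $\hat{Q}_k$ beyond membership in ${\mathcal C}$: since $Q_k(x,\cdot)=\langle q_x^{\mu_k},\cdot\rangle-\Omega(\cdot)$ is $\rho$-strongly concave with maximizer $f_{Q_k}(x)$, first-order optimality gives $Q_k(x,f_{Q_k}(x))-Q_k(x,u') \ge \tfrac{\rho}{2}\|f_{Q_k}(x)-u'\|_1^2$ for every $u'\in\sU$; taking $u'=f_{\hat{Q}_k}(x)$ and writing
\begin{align}
Q_k(x,f_{Q_k}(x))-Q_k(x,f_{\hat{Q}_k}(x)) &= \Bigl(\max_{u}Q_k(x,u)-\max_{u}\hat{Q}_k(x,u)\Bigr) \nonumber \\
&\phantom{=}+\Bigl(\hat{Q}_k(x,f_{\hat{Q}_k}(x))-Q_k(x,f_{\hat{Q}_k}(x))\Bigr) \le 2\,\|Q_k-\hat{Q}_k\|_{\infty} \nonumber
\end{align}
yields $\|f_{Q_k}(x)-f_{\hat{Q}_k}(x)\|_1^2 \le \tfrac{4}{\rho}\|Q_k-\hat{Q}_k\|_\infty = \theta\,\|Q_k-\hat{Q}_k\|_\infty$. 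This is the mechanism that produces the square root, hence the quadratic scaling of $\varepsilon_1$ in $\varepsilon$; with this substitution the rest of your argument goes through as written. (A minor shared issue: your bound $2(K_H)^K \le \varepsilon(1-K_H)$ on the deterministic term only guarantees $\le \varepsilon/2$ when $K_H\ge 1/2$, but the paper's own estimate $\|H^K(\mu_0)-\mu_*\|_1\le (K_H)^K/(1-K_H)$ has the same constant-factor slack, so this is not the substantive defect.)
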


\proof

Note that for any $\mu \in \Pnew(\sX)$, $Q \in {\cal C}$ ,and $\hQ \in {\mathcal F}$, we have
\small
\begin{align}
\|H_2(\mu,Q) - H_2(\mu,\hQ)\|_1 &= \sum_{y \in \sX} \left| \sum_{x \in \sX} P(y|x,f_{Q}(x),\mu) \, \mu(x) - \sum_{x \in \sX} P(y|x,f_{\hQ}(x),\mu) \, \mu(x) \right| \nonumber \\
&\leq \sum_{x \in \sX} \|P(\cdot|x,f_{Q}(x),\mu)-P(\cdot|x,f_{\hQ}(x),\mu)\|_1 \, \mu(x) \nonumber \\
&\leq \sum_{x \in \sX} K_1 \, \|f_{Q}(x)-f_{\hQ}(x)\|_1 \, \mu(x). \label{fbound}
\end{align}
\normalsize
Suppose that $Q$ is of the following form:
\begin{align}
Q(x,u) &= R^{\reg}(x,u,\mu) + \beta \sum_{y \in \sX} v(y) \, P(y|x,u,\mu) \nonumber \\
&= \langle q_x^{\mu,v}, u \rangle - \Omega(u), \nonumber 
\end{align}
where $v:\sX\rightarrow\R$ and 
$$
q_x^{\mu,v}(\cdot) \coloneqq r(x,\cdot,\mu) + \beta \sum_{y \in \sX} v(y) \, p(y|x,\cdot,\mu).
$$

Note that the mapping $f_{Q}(x)$ is the unique maximizer of $Q(x,\cdot)$ and $f_{\hQ}(x)$ is the  maximizer of $\hQ(x,\cdot)$. Let us set $f_{Q}(x)=u$ and $f_{\hQ}(x) = u'$. Then it follows that
\begin{align}
	Q(x,u) - Q(x,u') 
	&= \langle q_x^{\mu,v}, u \rangle - \Omega(u) - \langle q_x^{\mu,v}, u' \rangle + \Omega(u') \nonumber \\
	&= \langle q_x^{\mu,v}, u-u' \rangle +\Omega(u') - \Omega(u) \nonumber \\
	&\overset{(I)}{\geq}  \langle q_x^{\mu,v}, u-u' \rangle + \langle  \nabla \Omega(u), u'-u \rangle + \frac{\rho}{2} \, \|u-u'\|_1^2 \nonumber \\
	&= \langle \nabla Q(x,u), u-u'\rangle + \frac{\rho}{2}  \, \|u-u'\|_1^2 \nonumber \\
	&\overset{(II)}{\geq}  \frac{\rho}{2}  \, \|u-u'\|_1^2, \nonumber 
\end{align}
where (I) follows from strong convexity of $\Omega$ with respect to $l_1$-norm and (II) follows from first-order optimality condition for differentiable concave functions. 
Now, we have 
\begin{align}
	\|f_{Q}(x) &- f_{\hQ}(x)\|_1^2 \nonumber\\
	&\leq \frac{2}{\rho}  \, \left(Q(x,f_{Q}(x)) - Q(x,f_{\hQ}(x))\right) \nonumber \\
	&= \frac{2}{\rho}  \bigg(Q(x,f_{Q}(x)) - \hQ(x,f_{\hQ}(x)) +\hQ(x,f_{\hQ}(x)) - Q(x,f_{\hQ}(x))\bigg) \nonumber \\
	&= \frac{2}{\rho}  \bigg(\max_{u \in \sU} Q(x,u) - \max_{u \in \sU} \hQ(x,u) + \hQ(x,f_{\hQ}(x)) - Q(x,f_{\hQ}(x)) \bigg) \nonumber \\
	&\leq \frac{4}{\rho} \, \|Q-\hQ\|_{\infty} \eqqcolon \theta \, \|Q-\hQ\|_{\infty}. \label{perturbation2}
\end{align}

Hence, combining (\ref{fbound}) and (\ref{perturbation2}) yields
\begin{align}
\|H_2(\mu,Q) - H_2(\mu,\hQ)\|_1 \leq \sqrt{\theta} \, K_1 \, \sqrt{\|Q-\hQ\|_{\infty}}. \label{mainbound}
\end{align}

Using (\ref{mainbound}), for any $k=0,\ldots,K-1$, we have
\begin{align}
\|H(\mu_k) &- \hat{H}([N,L],M)(\mu_k)\|_1 \nonumber \\
&\leq \|H_2(\mu_k,H_1(\mu_k)) - H_2(\mu_k,\hat{H}_1[N,L](\mu_k))\|_1 \nonumber \\
&\phantom{xx} + \|H_2(\mu_k,\hat{H}_1[N,L](\mu_k)) - \hat{H}_2[M](\mu_k,\hat{H}_1[N,L](\mu_k))\|_1  \nonumber \\
&\leq \sqrt{\theta}  \, K_1 \, \sqrt{\|H_1(\mu_k) - \hat{H}_1[N,L](\mu_k)\|_{\infty}} \nonumber \\
&\phantom{xx} + \|H_2(\mu_k,\hat{H}_1[N,L](\mu_k)) - \hat{H}_2[M](\mu_k,\hat{H}_1[N,L](\mu_k))\|_1. \nonumber 
\end{align}
The last term is bounded from above by 
$$K_1 \sqrt{\theta (\varepsilon_1 + \Delta)} + \varepsilon_2$$
with probability at least $1-\frac{\delta}{K}$ by Theorem~\ref{Thm-H1} and Theorem~\ref{Thm-H2}. Therefore, with probability at least $1-\delta$ 
\begin{align}
&\|\mu_K - \mu_*\|_1 \nonumber \\
&\leq \sum_{k=0}^{K-1} K_H^{K-(k+1)} \, \|\hat{H}([N,L],M)(\mu_k) - H(\mu_k)\|_1 + \|H^K(\mu_0) - \mu_*\|_1 \nonumber \\
&\leq \sum_{k=0}^{K-1} K_H^{K-(k+1)} \left(K_1 \sqrt{\theta (\varepsilon_1 + \Delta)} + \varepsilon_2\right) + \frac{(K_H)^K}{1-K_H} \nonumber \\
&\leq \frac{K_1 \sqrt{\theta\, \Delta }}{(1-K_H)} + \varepsilon. \nonumber 
\end{align}
This completes the proof.\qed
\endproof

Now, we give the main result of this paper as a corollary of Theorem~\ref{main-result}. It states that, by using learning algorithm $\hat{H}$, one can obtain approximate regularized-mean-field equilibrium policy with high confidence. Since approximate regularized mean-field equilibrium policy constitutes an approximate Nash equilibrium for the finite-agent game model with sufficiently many agents, this learning algorithm also provides approximate Nash equilibrium.

\begin{corollary}\label{main-cor}
	Fix any $(\varepsilon,\delta) \in (0,1)^2$. Suppose that $K,L,N,M$ satisfy the conditions in Theorem~\ref{main-result}. Let $\mu_K$ be the output of the learning algorithm established by random operator $\hat{H}$ with inputs $$\left(K, \{[N,L]\}_{k=0}^K, \{M\}_{k=0}^{K-1}, \mu_0 \right).$$
	Define 
	$$\pi_K(x) \coloneqq \argmax_{u \in \sU} Q_K(x,u),$$ 
	where $Q_{K} = \hat{H}_1([N,L])(\mu_K)$. Then, with probability at least $1-\delta(1+\frac{1}{2K})$, the policy $\pi_K$ is a $\kappa(\varepsilon,\Delta)$-regularized mean-field equilibrium policy, where
	\begin{equation*}
		\kappa(\varepsilon,\Delta) = \sqrt{\theta \, \left(\frac{(1-K_H)^2 \, \varepsilon^2}{16 \, \theta \, (K_1)^2} + \Delta + K_{H_1} \left(\frac{K_1 \sqrt{\theta\, \Delta }}{(1-K_H)} + \varepsilon\right)\right)};
	\end{equation*}
	that is 
	$$
	\sup_{x \in \sX} \|\pi_K(x)-\pi_*(x)\|_1 \leq \kappa(\varepsilon,\Delta).
	$$
	Therefore, with probability at least $1-\delta(1+\frac{1}{2K})$, by Theorem~\ref{old-main-cor}, an $N$-tuple of policies ${\bf \pi}^{(N)} = \{\pi_K,\pi_K,\ldots,\pi_K\}$ is an $\tau \, \kappa(\varepsilon,\Delta) + \sigma$-Nash equilibrium for the regularized game with $N \geq N(\sigma)$ agents if $\mu_0 \in \Lambda^{\reg}(\pi_K)$.
\end{corollary}

\proof
By Theorem~\ref{main-result}, with probability at least $1-\delta(1+\frac{1}{2K})$, we have 
\begin{align}
\|Q_K - H_1(\mu_*)\|_{\infty} &\leq \|Q_K - H_1(\mu_K)\|_{\infty} + \|H_1(\mu_K) - H_1(\mu_*)\|_{\infty} \nonumber \\
&\leq \varepsilon_1 + \Delta + K_{H_1} \|\mu_K - \mu_*\|_1 \nonumber \\
&\leq \varepsilon_1 + \Delta + K_{H_1} \left(\frac{K_1 \sqrt{\theta\, \Delta }}{(1-K_H)} + \varepsilon\right) \nonumber \\
&= \frac{(1-K_H)^2 \, \varepsilon^2}{16 \, \theta \, (K_1)^2} + \Delta + K_{H_1} \left(\frac{K_1 \sqrt{\theta\, \Delta }}{(1-K_H)} + \varepsilon\right). \nonumber
\end{align}

Let $\pi_K(x) \coloneqq \argmax_{u \in \sU} Q_K(x,u)$. Using the same analysis that leads to (\ref{perturbation2}), we can obtain the following bound:
\begin{align}
	\sup_{x \in \sX} \|\pi_K(x)-\pi_*(x)\|^2_1 &\leq \theta \, \|Q_K-H_1(\mu_*)\|_{\infty}. \nonumber 
\end{align}
	Hence, with probability at least $1-\delta(1+\frac{1}{2K})$, the policy $\pi_K$ is a $\kappa(\varepsilon,\Delta)$-regularized mean-field equilibrium, where
\begin{equation*}
	\kappa(\varepsilon,\Delta) = \sqrt{\theta \, \left(\frac{(1-K_H)^2 \, \varepsilon^2}{16 \, \theta \, (K_1)^2} + \Delta + K_{H_1} \left(\frac{K_1 \sqrt{\theta\, \Delta }}{(1-K_H)} + \varepsilon\right)\right)}. 
\end{equation*}
This completes the proof.\qed
\endproof

\begin{remark}\label{remaa}
	In Corollary~\ref{main-cor}, there is a constant error $\Delta$, which is a function of representation error $E({\mathcal F})$. If we choose the class of $Q$-functions ${\mathcal F}$ as ${\mathcal C}$, then there will be no representation error, i.e, $E({\mathcal F})=0$, and so, $\Delta = 0$. Hence, in this case, we have the following error bound:
	$$
	\kappa(\varepsilon,0) \coloneqq \sqrt{\theta \, \left( \frac{(1-K_H)^2 \, \varepsilon^2}{16 \, \theta \, (K_1)^2} +  K_{H_1} \varepsilon \right)},$$
	which goes to zero as $\varepsilon \rightarrow 0$.  
\end{remark}

\section{Numerical Example}\label{num_ex}
	
In this section, we show the effectiveness of the learning algorithm with a numerical example. In this example, we consider a mean-field game with a binary state space $\sX = \{0,1\}$ and a binary action space $\sA = \{0,1\}$. The transition probability $p: \sX \times \sA \rightarrow \Pnew(\sX)$ is independent of the mean-field term and is given by 
\begin{align}
	&p(1|0,0) = \eta, \,\, p(1|1,0) = 1-\alpha, \nonumber \\
	&p(1|0,1)=\kappa, \,\, p(1|1,1) = 1-\xi. \nonumber
\end{align}
The one-stage reward function $r:\sX \times \sA \times \Pnew(\sX) \rightarrow [0,\infty)$ depends on the mean-field term and is defined as 
$$
	r(x,a,\mu) = \tau (1-\langle \mu \rangle)(1-x)+\lambda\langle \mu \rangle (1-a), 
$$
where $\langle \mu \rangle$ is the mean of the distribution $\mu$ on $\sX$. This model satisfies Assumption~\ref{as1} with 
\begin{align}
L_1 &= \max\left\{\tau,\frac{\tau+\lambda}{2}\right\} \nonumber \\ 
K_1 &=  \max\left\{2|1-\alpha-\eta|,|\eta-\kappa|,\frac{2}{3} |1-\xi-\eta|,\frac{2}{3} |1-\alpha-\kappa|,2|1-\xi-\kappa|,|\xi-\alpha|\right\}. \nonumber
\end{align}

In the equivalent game model, the action space becomes $\sU = \Pnew(\sA)$. With this new action space, the new transition probability $P: \sX \times \sU \rightarrow \Pnew(\sX)$ is given by 
$$
	P(\cdot|x,u) = p(\cdot|x,0) \, u(0) + p(\cdot|x,1) \, u(1),
$$
and the new one-stage reward function $R:\sX \times \sU \times \Pnew(\sX) \rightarrow [0,\infty)$ is given by
$$
	R(x,u,\mu) = \tau (1-\langle \mu \rangle)(1-x)+\lambda\langle \mu \rangle u(0).
$$
The regularization function $\Omega: \Pnew(\sX) \rightarrow \R$ is taken as the weighted negative binary entropy:
$$
	\Omega(u) = \gamma \, \left(\log(u(0)) \, u(0) + \log(u(1)) \, u(1) \right). 
$$
Therefore, the regularized one-stage reward function is $$R^{\reg}(x,u,\mu) = R(x,u,\mu)-\Omega(u).$$ 
Note that $\Omega$ is a $\gamma$-strongly convex function with respect to the $l_1$-norm.
	
For numerical results, we use the following values of the parameters:
\begin{align}
	\eta&=0.6, \, \alpha = 0.3, \, \kappa=0.7, \, \xi=0.2 \nonumber \\
	\tau&=0.2, \, \lambda=0.2, \, \gamma=0.15, \, \beta=0.2. \nonumber 
\end{align}
With these parameters, Lipschitz constants in Assumption~\ref{as1} become $L_1=0.2$ and $K_1=0.2$. Using these constants, $\rho=\gamma=0.15$, and $\beta=0.2$, one can also verify that Assumption~\ref{as2} holds.
We run the learning algorithm $20$ times using the following parameters: $N=1000, L=10, M=1000, K=20$ and take the average of the outputs. Here, output of the learning algorithm contains the mean-field term, mean-field policy, and corresponding value function. In fitted $Q$-learning algorithm, we pick the function class $\F$ as two-layer neural networks with $20$ hidden units. We use neural network fitting tool of MATLAB. In particular, we use `$\mathrm{fitnet}$', `$\mathrm{train}$', and `$\mathrm{net}$' functions of MATLAB, where `Levenberg-Marquardt' is picked as the training algorithm and the transfer function is chosen as `hyperbolic tangent sigmoid transfer function'. The parameters of the neural network fitting tool of MATLAB are set to default values.  We also run the value iteration algorithm using MFE operator $H$ to find the correct mean-field term, mean-field policy, and corresponding value function. Then, we compare the learned outputs with correct outputs. Figures~\ref{meanfield_comp}, \ref{policy_comp}, and \ref{reward_comp} show this comparison. It can be seen that learned outputs converge to the true outputs. 
	
\begin{figure}[H]
	\includegraphics[scale=0.5]{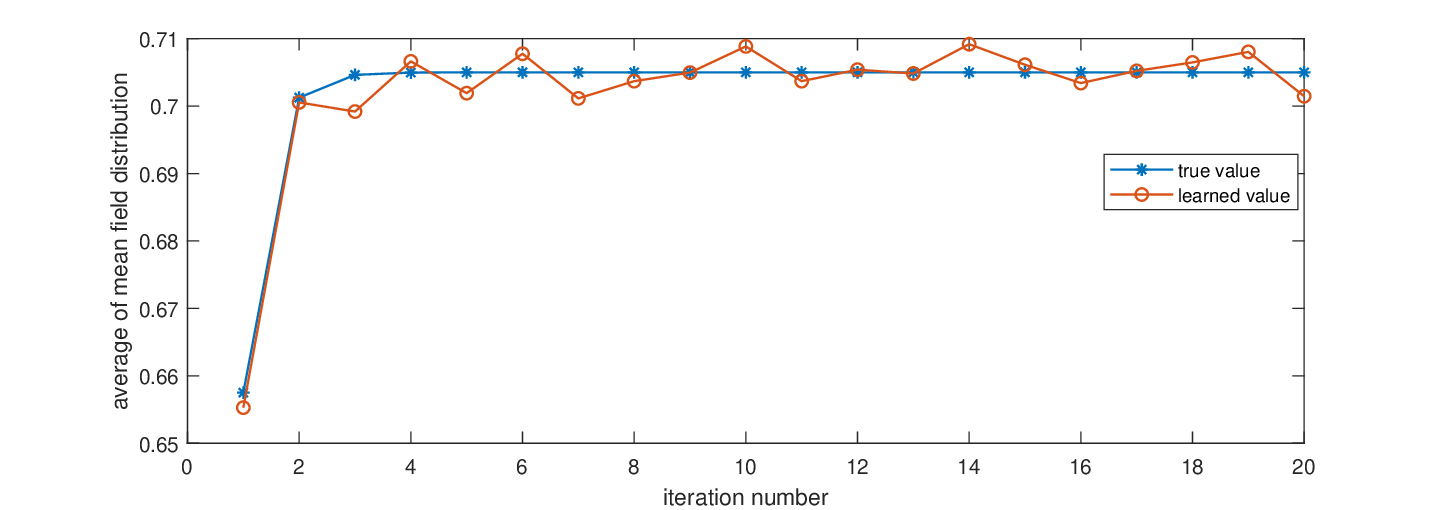}
	\caption{Comparison of mean-field terms} \label{meanfield_comp}
\end{figure}
	
\begin{figure}[H]
	\includegraphics[scale=0.45]{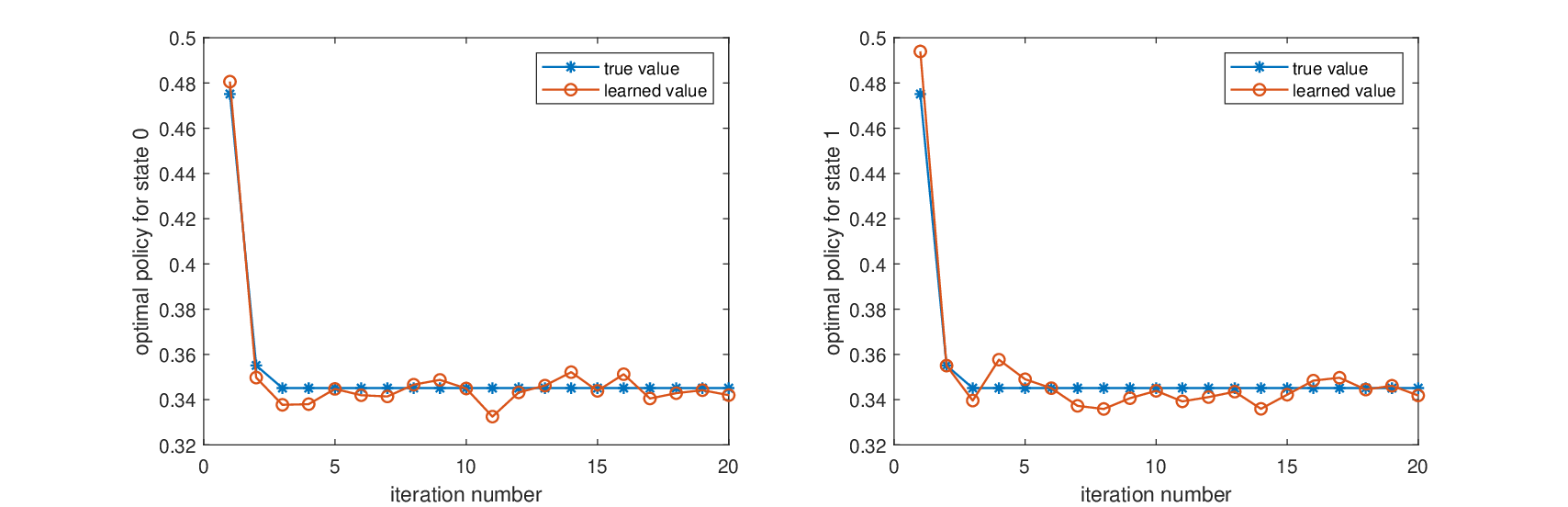}
	\caption{Comparison of policies} \label{policy_comp}
\end{figure}
	
\begin{figure}[H]
	\centering
	\includegraphics[scale=0.45]{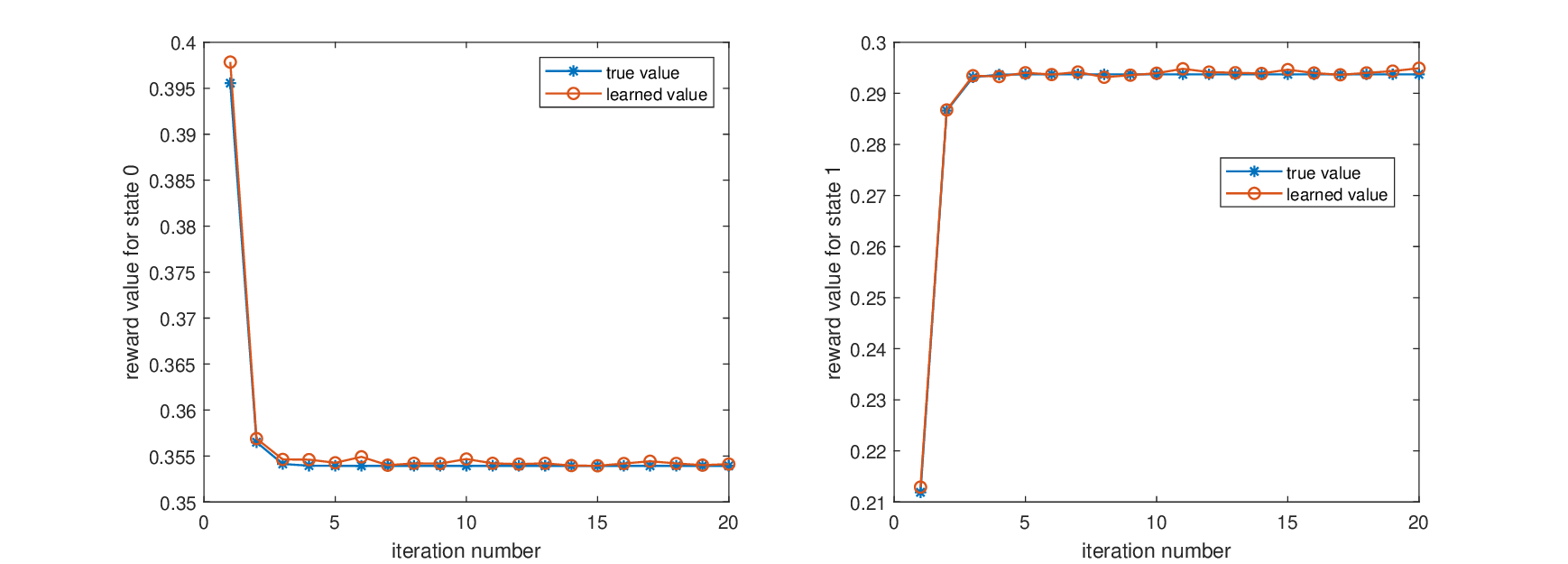}
	\caption{Comparison of rewards} \label{reward_comp}
\end{figure}   

\section{Conclusion}\label{conc}

In this paper, we have established a learning algorithm for discrete time regularized mean-field games subject to discounted reward criterion via fitted Q-learning. It is supposed that adding regularization term to the one-stage reward function makes the learning algorithm more robust and improves exploration. In addition to these advantages, with regularization term, the error analysis of the learning algorithm  has been established under milder assumptions compared to the classical version of the game model. 

One interesting future direction is to study learning regularized mean-field games with abstract observation and action spaces. In this case, to obtain similar results, one needs to extend duality of strong convexity and smoothness to the functions defined on infinite dimensional spaces such as the set of probability measures on abstract spaces.

\section{Appendix}

\subsection{Duality of Strong Convexity and Smoothness}\label{dualitysec}
	
	Suppose that $\sE = \R^d$ for some $d \geq 1$ with an inner product $\langle \cdot,\cdot \rangle$. We denote $\R^{*} = \R \, \bigcup \, \{\infty\}$. Let $f:\sE \rightarrow \R^{*}$ be a differentiable convex function with the domain $S \coloneqq \{x \in \sE: f(x) \in \R\}$, which is necessarily convex subset of $\sE$. The Fenchel conjugate of $f$ is a convex function $f^*:\sE \rightarrow \R^*$ that is defined as 
	$$
	f^*(y) \coloneqq \sup_{x \in S} \, \langle x,y \rangle - f(x).
	$$

	Now, we will state duality result between strong convexity and smoothness. To this end, we suppose that $f$ is $\rho$-strongly convex with respect to a norm $\|\cdot\|$ on $\sE$  (not necessarily Euclidean norm); that is, for all $x,y \in S$, we have 
	$$
	f(y) \geq f(x) + \langle \nabla f(x),y-x \rangle + \frac{1}{2} \rho \|y-x\|^2. 
	$$
	To state the result, we need to define the dual norm of $\|\cdot\|$. The dual norm $\|\cdot\|_*$ of $\|\cdot\|$ on $\sE$ is defined as 
	$$
	\|z\|_* \coloneqq \sup\{\langle z,x \rangle: \|x\| \leq 1\}.
	$$ 
	For example, $\|\cdot\|_{\infty}$ is the dual norm of $\|\cdot\|_{1}$.  
	
	\smallskip
	
	\begin{proposition}[{\cite[Lemma 15]{Sha07}}]\label{duality}
		Let $f:\sE \rightarrow \R^*$ be a differentiable $\rho$-strongly convex function with respect to the norm $\|\cdot\|$ and let $S$ denote its domain. Then
		\begin{itemize}
			\item[1.] $f^*$ is differentiable on $\sE$.
			\item[2.] $\nabla f^*(y) = \argmax_{x \in S} \langle x,y \rangle - f(x)$. 
			\item[3.] $f^*$ is $\frac{1}{\rho}$-smooth with respect to the norm $\|\cdot\|_*$ ; that is, 
			$$
			\|\nabla f^*(y_1) - \nabla f^*(y_2)\| \leq \frac{1}{\rho} \|y_1-y_2\|_* \,\, \text{for all} \,\, y_1,y_2 \in \sE.
			$$
		\end{itemize}
	\end{proposition}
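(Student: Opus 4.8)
The plan is to first show that the supremum defining $f^*(y)$ is attained at a unique point for every $y$, then to identify this maximizer with both $\nabla f^*(y)$ and the $\argmax$ in item 2, and finally to read off the smoothness estimate of item 3 directly from strong convexity. Concretely, I would fix $y \in \sE$ and study the map $x \mapsto g_y(x) \coloneqq \langle x,y \rangle - f(x)$ on $S$. Since $f$ is $\rho$-strongly convex, $g_y$ is $\rho$-strongly concave, and a strongly concave function on a convex set has a unique maximizer; call it $x(y)$. This already shows $f^*(y) = g_y(x(y))$ and that $\argmax_{x \in S}\langle x,y\rangle - f(x)$ is single-valued, producing the candidate for $\nabla f^*(y)$.

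For items 1 and 2, I would establish $\partial f^*(y) = \{x(y)\}$. One inclusion is immediate from the definition of $f^*$: for every $y' \in \sE$,
\[
f^*(y') \geq \langle x(y), y'\rangle - f(x(y)) = f^*(y) + \langle x(y), y'-y\rangle,
\]
so $x(y) \in \partial f^*(y)$. For the reverse inclusion I would invoke the Fenchel--Young equivalence $z \in \partial f^*(y) \iff y \in \partial f(z)$; the latter relation says precisely that $z$ maximizes $g_y$, whence $z = x(y)$ by uniqueness. Thus the subdifferential of the convex function $f^*$ is a singleton at every $y$, which proves differentiability (item 1) and the formula $\nabla f^*(y) = x(y)$ (item 2).

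For item 3 I would set $x_i \coloneqq \nabla f^*(y_i) = x(y_i)$ and use that $x_i$ minimizes the $\rho$-strongly convex function $h_i(\cdot) \coloneqq f(\cdot) - \langle \cdot, y_i\rangle$ over $S$. The first-order optimality condition $\langle \nabla h_i(x_i), x - x_i\rangle \geq 0$ for all $x \in S$ turns the strong convexity inequality into
\[
h_1(x_2) \geq h_1(x_1) + \tfrac{\rho}{2}\|x_1-x_2\|^2, \quad h_2(x_1) \geq h_2(x_2) + \tfrac{\rho}{2}\|x_1-x_2\|^2.
\]
Adding these, the values of $f$ cancel telescopically and I am left with $\langle x_1-x_2,\, y_1-y_2\rangle \geq \rho\,\|x_1-x_2\|^2$. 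Bounding the left-hand side by $\|x_1-x_2\|\,\|y_1-y_2\|_*$ through the definition of the dual norm and cancelling one factor of $\|x_1-x_2\|$ yields $\|x_1-x_2\| \leq \frac{1}{\rho}\|y_1-y_2\|_*$, which is exactly item 3.

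I expect the main obstacle to be the rigorous proof of item 1: differentiability of $f^*$ hinges on the uniqueness of the maximizer together with the Fenchel--Young relation, and the latter requires $f$ to be closed (lower semicontinuous), a technical point that must be handled with care. By contrast, item 3 is a short self-contained computation once the identity $\nabla f^* = x(\cdot)$ is in hand.
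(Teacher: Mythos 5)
Your proof cannot be compared against an in-paper argument, because the paper never proves this proposition: it is imported verbatim as \cite[Lemma 15]{Sha07}. What you have written is, in essence, the standard proof of that cited lemma, and it is correct in its main lines: identifying $\partial f^*(y)=\{x(y)\}$ via the Fenchel--Young equivalence gives items 1 and 2 (using the standard fact that a finite convex function on $\R^d$ is differentiable exactly where its subdifferential is a singleton, and $f^*$ is finite everywhere by coercivity of $g_y$), and item 3 follows cleanly from adding the two strong-convexity inequalities at the two optimizers to get $\langle x_1-x_2,\,y_1-y_2\rangle \geq \rho\|x_1-x_2\|^2$ and then applying the dual-norm inequality. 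One caveat deserves emphasis, and you place it in slightly the wrong spot: closedness (lower semicontinuity) of $f$ is needed not only for the Fenchel--Young equivalence $z \in \partial f^*(y) \iff y \in \partial f(z)$, but already for your very first step, the existence of the maximizer $x(y)$. Strong concavity of $g_y$ gives coercivity and hence uniqueness of any maximizer, but without a closed domain the supremum need not be attained: take $f(x)=x^2$ on $S=(0,1)$ (extended by $+\infty$ off $S$) and $y=10$; then $\sup_{x\in S}(xy-x^2)=9$ is approached only as $x\to 1\notin S$, so the $\argmax$ in item 2 is empty even though $f^*$ is perfectly smooth there. This is less a flaw of your proof than of the paper's restatement: the original Lemma 15 of \cite{Sha07} assumes $f$ closed, a hypothesis silently dropped here, and harmless in the paper's application since the regularizer $\Omega$ lives on the compact set $\sU=\P(\sA)$, whose associated domain is automatically closed. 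With closedness restored to the hypotheses, your argument is complete: existence of $x(y)$ follows from coercivity plus upper semicontinuity of $g_y$ on the closed set $S$ in the finite-dimensional setting $\sE=\R^d$, and the rest goes through exactly as you wrote it.
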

	
	In the paper, we make use of the properties $2$ and $3$ of Proposition~\ref{duality} to establish the Lipschitz continuity of the optimal policies, which enables us to prove the main results of our paper.

\subsection{Proof of Proposition~\ref{new-con}}\label{app0}

Fix any $x,\hat{x}, u,\hat{u}, \mu, \hat{\mu}$. 
		Let us recall the following fact about $l_1$ norm on the set probability distributions on finite sets \cite[p. 141]{Geo11}. Suppose that there exists a real valued function $F$ on a finite set $\sE$. Let $\lambda(F) \coloneqq \sup_{e \in \sE} F(e) - \inf_{e \in \sE} F(e)$. Then, for any pair of probability distributions $\mu,\nu$ on $\sE$, we have 
		\begin{align}
		\left|\sum_{e} F(e) \, \mu(e) - \sum_{e} F(e) \, \nu(e) \right| \leq \frac{\lambda(F)}{2} \, \|\mu-\nu\|_{1}. \label{tv-bound}
		\end{align}
		Using this fact, we now have
		\begin{align*}
		|R(x,u,\mu) - R(\hat{x},\hat{u},\hat{\mu})| &= \left| \sum_{a \in \sA} r(x,a,\mu) \, u(a) - \sum_{a \in \sA} r(\hat{x},a,\hat{\mu}) \, \hat{u}(a) \right| \\
		&\leq  \left| \sum_{a \in \sA} r(x,a,\mu) \, u(a) - \sum_{a \in \sA} r(x,a,\mu) \, \hat{u}(a) \right| 
		\\ &\phantom{xx} + \left| \sum_{a \in \sA} r(x,a,\mu) \, \hat{u}(a) - \sum_{a \in \sA} r(\hat{x},a,\hat{\mu}) \, \hat{u}(a) \right|   \\
		&\leq  L_1 \, \left( 1_{\{x \neq \hat{x}\}} + \|u-\hat{u}\|_1 +\|\mu-\hat{\mu}\|_1 \right), 
		\end{align*}
		where the last inequality follows from the following fact in view of (\ref{tv-bound}):
		\begin{align}
		\sup_{a} r(x,a,\mu) - \inf_{a} r(x,a,\mu) &\coloneqq r(x,a_{\max},\mu) - r(x,a_{\min},\mu) \nonumber \\
		&\leq 2 L_1 \, 1_{\{a_{\max} \neq a_{\min}\}} = 2L_1. \nonumber
		\end{align}

	Similarly, we have
	\begin{align}
	\|P(\cdot|x,u,\mu) &- P(\cdot|\hat{x},\hat{u},\hat{\mu})\|_1 \nonumber \\
	&= \sum_{y \in \sX} \left| P(y|x,u,\mu) - P(y|\hat{x},\hat{u},\hat{\mu}) \right| \nonumber \\
	&=  \sum_{y \in \sX} \left| \sum_{a \in \sA} p(y|x,a,\mu) \, u(a) - \sum_{a \in \sA} p(y|\hat{x},a,\hat{\mu}) \, \hat{u}(a) \right| \nonumber \\
	&\leq \sum_{y \in \sX} \left| \sum_{a \in \sA} p(y|x,a,\mu) \, u(a) - \sum_{a \in \sA} p(y|x,a,\mu) \, \hat{u}(a) \right| \nonumber \\
	&\phantom{xx}+ \sum_{y \in \sX} \left| \sum_{a \in \sA} p(y|x,a,\mu) \, \hat{u}(a) - \sum_{a \in \sA} p(y|\hat{x},a,\hat{\mu}) \, \hat{u}(a) \right| \nonumber \\
	&\overset{(I)}{\leq} K_1 \|u-\hat{u}\|_1  +  \sum_{y \in \sX} \left| \sum_{a \in \sA} p(y|x,a,\mu) \, \hat{u}(a) - \sum_{a \in \sA} p(y|\hat{x},a,\hat{\mu}) \, \hat{u}(a) \right|  \nonumber \\
	&\leq K_1 \, \left( 1_{\{x \neq \hat{x}\}} + \|u-\hat{u}\|_1 +\|\mu-\hat{\mu}\|_1 \right). \nonumber
	\end{align}
	To show that (I) follows from Assumption~\ref{as1}-(b), let us define the transition probability $M:\sA \rightarrow \Pnew(\sX)$ as
	$$
	M(\cdot|a) \coloneqq p(\cdot|x,a,\mu).
	$$
	Let $\xi \in \Pnew(\sA\times\sA)$ be the optimal coupling of $u$ and $\hat{u}$ that achieves total variation distance $\|u-\hat{u}\|_{TV}$. Similarly, for any $a,\hat{a} \in \sA$, let $K(\cdot|a,\hat{a}) \in \Pnew(\sX\times\sX)$ be the optimal coupling of $M(\cdot|a)$ and $M(\cdot|\hat{a})$ that achieves total variation distance $\|M(\cdot|a)-M(\cdot|\hat{a})\|_{TV}$. Note that 
	\begin{equation*}
	\sum_{y \in \sX} \left| \sum_{a \in \sA} p(y|x,a,\mu) \, u(a) - \sum_{a \in \sA} p(y|x,a,\mu) \, \hat{u}(a) \right| = 2 \|u M-\hat{u} M\|_{TV},
	\end{equation*}
	where 
	$$u M(\cdot) \coloneqq \sum_{a \in \sA} M(\cdot|a) \, u(a)$$ 
	and 
	$$\hat{u} M(\cdot) \coloneqq \sum_{a \in \sA} M(\cdot|a) \, \hat{u}(a).$$ 
	Let us define $\nu(\cdot) \coloneqq \sum_{(a,\hat{a}) \in \sA \times \sA} K(\cdot|a,\hat{a}) \, \xi(a,\hat{a})$, and so, $\nu$ is a coupling of $u M$ and $\hat{u} M$. Therefore, we have
	\begin{align}
	2 \, \|u M-\hat{u} M\|_{TV} &\leq 2 \sum_{(x,y) \in \sX\times\sX} 1_{\{x \neq y\}} \, \nu(x,y) \nonumber \\
	&= 2 \sum_{(a,\hat{a}) \in \sA \times \sA} \sum_{(x,y) \in \sX \times \sX} 1_{\{x \neq y\}} \, K(x,y|a,\hat{a}) \, \xi(a,\hat{a}) \nonumber \\
	&= \sum_{(a,\hat{a}) \in \sA \times \sA} \|M(\cdot|a)-M(\cdot|\hat{a})\|_{1} \, \xi(a,\hat{a}) \nonumber \\
	&\leq 2 \, K_1 \, \sum_{(a,\hat{a}) \in \sA \times \sA} 1_{\{a \neq \hat{a}\}} \, \xi(a,\hat{a}) \nonumber \\
	&= K_1 \, \|u-\hat{u}\|_{1}. \nonumber
	\end{align}
	Hence, (I) follows. This completes the proof.

\subsection{Proof of Lemma~\ref{lip-value}}\label{app01}

Fix any $\mu$. If a function $f: \sX \rightarrow \R$ is $K$-Lipschitz continuous for some $K$, then $g = \frac{f}{K}$ is $1$-Lipschitz continuous. Hence, for all $u \in \sU$ and $z,y \in \sX$ we have
		\begin{align}
		\biggl | \sum_{x} f(x) P(x|z,u,\mu) &- \sum_{x} f(x) P(x|y,u,\mu) \biggr | \nonumber \\
		&= K \biggl | \sum_{x} g(x) P(x|z,u,\mu) - \sum_{x} g(x) P(x|y,u,\mu) \biggr | \nonumber \\
		&\leq \frac{K}{2} \, \|P(\,\cdot\,|z,u,\mu) - P(\,\cdot\,|y,u,\mu)\|_1 \nonumber \, \text{(by (\ref{tv-bound}))}\\
		&\leq \frac{KK_1}{2} \, 1_{\{z \neq y\}}, \, \text{(by Proposition~\ref{new-con})} \nonumber
		\end{align}
		since $\sup_x g(x) - \inf_x g(x) \leq 1$. Hence, the contraction operator $T_{\mu}$ maps $K$-Lipschitz functions to $L_1+\beta K K_1/2$-Lipschitz functions, since, for all $z,y \in \sX$
		\begin{align}
		| T_{\mu}f(z) - T_{\mu}f(y) | 
		&\leq \sup_{u} \biggl \{ |R(z,u,\mu) - R(y,u,\mu)| \nonumber \\
		&\phantom{xx}+ \beta \biggl | \sum_{x} f(x) P(x|z,u,\mu) - \sum_{x} f(x) P(x|y,u,\mu) \biggr | \biggr \}\nonumber \\
		&\leq L_1 1_{\{z\neq y\}} + \beta \frac{K K_1}{2} 1_{\{z \neq y\}} = \biggl(L_1 + \beta \frac{K K_1}{2}\biggr) 1_{\{z \neq y\}}. \nonumber
		\end{align}
		Now we apply $T_{\mu}$ recursively to obtain the sequence $\{T_{\mu}^n f\}$ by letting $T_{\mu}^n f = T_{\mu} (T_{\mu}^{n-1} f )$, which converges to the value function $Q^{\reg,*}_{\mu,\max}$ by the Banach fixed point theorem. Clearly, by mathematical induction, we have for all $n\geq1$, $T_{\mu}^n f$ is $K_n$-Lipschitz continuous, where $K_n = L_1 \sum_{i=0}^{n-1} (\beta K_1/2)^i + K (\beta K_1/2)^n$. If we choose $K < L_1$, then $K_n \leq K_{n+1}$ for all $n$ and therefore, $K_n \uparrow \frac{ L_1}{1-\beta K_1/2}$. Hence, $T_{\mu}^n f$ is $ \frac{L_1}{1-\beta K_1/2}$-Lipschitz continuous for all $n$, and therefore, $Q^{\reg,*}_{\mu,\max}$ is also  $ \frac{L_1}{1-\beta K_1/2}$-Lipschitz continuous.

\subsection{Proof of Lemma~\ref{n-lemma1}}\label{app02}

Under Assumption~\ref{as1}, it is straightforward to prove that $H_1$ maps $\Pnew(\sX)$ into ${\mathcal C}$. Indeed, the only non-trivial fact is the $\left(K_{\Lip}+L_{\reg}\right)$-Lipschitz continuity of $H_1(\mu) \eqqcolon Q_{\mu}^{\reg,*}$. This can be proved as follows: For any $(x,u)$ and $(\hat{x},\hat{u})$, we have
	\begin{align}
	|Q_{\mu}^{\reg,*}(x,u)-Q_{\mu}^{\reg,*}(\hat{x},\hat{u})| 
	&= |R(x,u,\mu) - \Omega(u) + \beta \sum_{y} Q_{\mu,\max}^{\reg,*}(y) P(y|x,u,\mu) \nonumber \\
	& - R(\hat{x},\hat{u},\mu) + \Omega(\hat{u}) - \beta \sum_{y} Q_{\mu,\max}^{\reg,*}(y) P(y|\hat{x},\hat{u},\mu)| \nonumber \\
	&\leq L_1 (1_{\{x\neq\hat{x}\}}+\|u-\hat{u}\|_1) + L_{\reg} \|u-\hat{u}\|_1 \nonumber \\
	&\phantom{xx}+ \beta \frac{K_1 K_{\Lip}}{2} \, (1_{\{x\neq\hat{x}\}}+\|u-\hat{u}\|_1),\nonumber   
	\end{align}
	where the last inequality follows from (\ref{tv-bound}) and Lemma~\ref{lip-value}. Hence, $Q_{\mu}^{\reg,*}$ is $\left(K_{\Lip}+L_{\reg}\right)$-Lipschitz continuous. 
	
	Now, for any $\mu,\hmu \in \Pnew(\sX)$, we have 
	
	\begin{align}
	\|H_1(\mu) -& H_1(\hmu)\|_{\infty} = \|Q_{\mu}^{\reg,*}-Q_{\hmu}^{\reg,*}\|_{\infty} \nonumber \\
	&= \sup_{x,u} \bigg| R(x,u,\mu) + \beta \sum_{y} Q_{\mu,\max}^{\reg,*}(y) P(y|x,u,\mu) \nonumber \\
	&\phantom{xxxxxxxxxxxxxx} - R(x,u,\hmu) - \beta \sum_{y} Q_{\hmu,\max}^{\reg,*}(y)  P(y|x,u,\hmu) \bigg| \nonumber \\
	&\leq L_1 \, \|\mu-\hmu\|_1 \nonumber \\
	&\phantom{xx} + \beta \left| \sum_{y} Q_{\mu,\max}^{\reg,*}(y) P(y|x,u,\mu) - \sum_{y} Q_{\mu,\max}^{\reg,*}(y) P(y|x,u,\hmu) \right| \nonumber \\
	&\phantom{xx} + \beta \left| \sum_{y} Q_{\mu,\max}^{\reg,*}(y) P(y|x,u,\hmu) - \sum_{y} Q_{\hmu,\max}^{\reg,*}(y) P(y|x,u,\hmu) \right| \nonumber \\
	&\leq L_1 \, \|\mu-\hmu\|_1 + \frac{\beta  K_1 K_{\Lip}}{2} \, \|\mu-\hmu\|_1 + \beta \, \|Q_{\mu}^{\reg,*}-Q_{\hmu}^{\reg,*}\|_{\infty}, \nonumber
	\end{align}
	
	where the last inequality follows from (\ref{tv-bound}) and Lemma~\ref{lip-value}. This completes the proof.
	
\subsection{Proof of Lemma~\ref{Lips-pol}}\label{app03}

For any $\mu \in \Pnew(\sX)$, we have
	\begin{align}
		Q_{\mu}^{\reg,*}(x,u) &= L_{\mu} Q_{\mu}^{\reg,*}(x,u) \nonumber \\
		&= R(x,u,\mu) + \beta \sum_{y \in \sX} Q_{\mu,\max}^{\reg,*}(y) \, P(y|x,u,\mu) - \Omega(u) \nonumber \\
		&= \langle q_{x}^{\mu},u \rangle - \Omega(u), \nonumber   
	\end{align}
	where 
	$
	q_{x}^{\mu}(\cdot) \coloneqq r(x,\cdot,\mu) + \beta \sum_{y \in \sX} Q_{\mu,\max}^{\reg,*}(y) \, p(y|x,\cdot,\mu).
	$
	By $\rho$-strong convexity of $\Omega$, $Q_{\mu}^{\reg,*}(x,\cdot)$ has a unique maximizer $f_{\mu}(x) \in \sU$ for any $x \in \sX$, which is the optimal policy for $\mu$. By Property $2$ of Proposition~\ref{duality}, we have 
	 $$f_{\mu}(x) = \nabla \Omega^*(q_x^{\mu}),$$ where $\Omega^*$ is the Fenchel conjugate of $\Omega$, and $\Omega^*(q_x^{\mu}) = Q_{\mu,\max}^{\reg,*}(x)$.
		
	Moreover, for any $\mu,\hmu \in \Pnew(\sX)$ and $x,\hx \in \sX$, by property $3$ of Proposition~\ref{duality} and by noting the fact that $\|\cdot\|_{\infty}$ is the dual norm of $\|\cdot\|_1$ on $\sU$, we obtain the following bound:
	\begin{align}
		\|f_{\mu}(x)-f_{\hmu}(\hx)\|_1 \leq \frac{1}{\rho} \, \|q_{x}^{\mu}-q_{\hx}^{\hmu}\|_{\infty}. \nonumber
	\end{align} 
	Note that we have
	\begin{align}
		\|q_{x}^{\mu}-q_{\hx}^{\hmu}\|_{\infty} 
		&=\sup_{a \in \sA} \bigg|r(x,a,\mu) + \beta \sum_{y \in \sX} Q_{\mu,\max}^{\reg,*}(y) \, p(y|x,a,\mu) \nonumber \\
		&\phantom{xxxxxxxxxxxxxxxxx}- r(\hx,a,\hmu) - \beta \sum_{y \in \sX} Q_{\hmu,\max}^{\reg,*}(y) \, p(y|\hx,a,\hmu) \bigg| \nonumber \\
		&\leq L_1 (1_{\{x \neq \hx\}} + \|\mu-\hmu\|_1) \nonumber \\
		&\phantom{xx} + \beta \sup_{a \in \sA} \bigg| \sum_{y} Q_{\mu,\max}^{\reg,*}(y) p(y|x,a,\mu) - \sum_{y} Q_{\hmu,\max}^{\reg,*}(y) p(y|x,a,\mu) \bigg| \nonumber \\
		&\phantom{xx} + \beta \sup_{a \in \sA} \bigg| \sum_{y} Q_{\hmu,\max}^{\reg,*}(y) p(y|x,a,\mu) - \sum_{y} Q_{\hmu,\max}^{\reg,*}(y) p(y|\hx,a,\hmu) \bigg| \nonumber \\
		&\leq L_1 (1_{\{x \neq \hx\}} + \|\mu-\hmu\|_1)  + \beta \| Q_{\mu}^{\reg,*}- Q_{\hmu}^{\reg,*}\|_{\infty} \nonumber \\
		&\phantom{xx} + \beta \frac{K_1 K_{\Lip}}{2} (1_{\{x \neq \hx\}} + \|\mu-\hmu\|_1) \nonumber \\
		&\leq K_{\Lip} \, (1_{\{x \neq \hx\}} + \|\mu-\hmu\|_1) + \beta K_{H_1} \, \|\mu-\hmu\|_1 \nonumber \\
		&\leq K_{H_1} (1_{\{x \neq \hx\}} + \|\mu-\hmu\|_1). \nonumber 
	\end{align}
	Therefore we obtain 
	\begin{align}
		\|f_{\mu}(x)-f_{\hmu}(\hx)\|_1 \leq \frac{1}{\rho} \, K_{H_1} (1_{\{x \neq \hx\}} + \|\mu-\hmu\|_1). \nonumber
	\end{align}

\subsection{Proof of Theorem~\ref{old-main-cor}}\label{app1}
	Let $\mu_{\varepsilon} \in \Lambda^{\reg}(\pi_{\varepsilon})$. Then, we have
	\begin{align}
	&\|\mu_{\varepsilon}-\mu_*\|_1 \nonumber \\ 
	&= \sum_{y} \, \bigg| \sum_{x} \, P(y|x,\pi_{\varepsilon},\mu_{\varepsilon}) \, \mu_{\varepsilon}(x) - \sum_{x} \, P(y|x,\pi_*(x),\mu_*) \, \mu_*(x) \biggr| \nonumber \\
	&\leq \sum_{y} \, \bigg| \sum_{x} \, P(y|x,\pi_{\varepsilon},\mu_{\varepsilon}) \, \mu_{\varepsilon}(x) - \sum_{x} \, P(y|x,\pi_*(x),\mu_*) \, \mu_{\varepsilon}(x) \biggr| \nonumber \\
	&\phantom{xx}+ \sum_{y} \, \bigg| \sum_{x} \,P(y|x,\pi_*(x),\mu_*) \, \mu_{\varepsilon}(x) - \sum_{x} \, P(y|x,\pi_*(x),\mu_*) \, \mu_*(x) \biggr| \nonumber \\
	&\overset{(I)}{\leq} \sum_{x} \left\|P(\cdot|x,\pi_{\varepsilon}(x),\mu_{\varepsilon})-P(\cdot|x,\pi_*(x),\mu_*) \right\|_1 \mu_{\varepsilon}(x) \nonumber \\
	&\phantom{xx}+ \frac{K_1}{2} \left( 1 + \frac{K_{H_1}}{\rho} \right) \, \|\mu_{\varepsilon}-\mu_*\|_1 \nonumber \\
	&\leq K_1 \left( \sup_x \|\pi_{\varepsilon}(x)-\pi_*(x)\|_1 + \|\mu_{\varepsilon}-\mu_*\|_1 \right) \nonumber \\
	&\phantom{xx}+ \frac{K_1}{2} \left( 1 + \frac{K_{H_1}}{\rho} \right) \, \|\mu_{\varepsilon}-\mu_*\|_1 \nonumber \\
	&\leq K_1 \, \varepsilon +  \left(\frac{3 \, K_1}{2}  + \frac{K_1 \, K_{H_1}}{2\rho} \right) \, \|\mu_{\varepsilon}-\mu_*\|_1. \nonumber 
	\end{align}
	Note that Lemma~\ref{Lips-pol} and Proposition~\ref{new-con} lead to
	\begin{equation*}
	\|P(\cdot|x,\pi_*(x),\mu_*)-P(\cdot|y,\pi_*(y),\mu_*)\|_1 \leq K_1 \left( 1 + \frac{K_{H_1}}{\rho} \right) \, 1_{\{x \neq y\}}.
	\end{equation*}
	Hence, (I) follows from \cite[Lemma A2]{KoRa08}. Therefore, we have: 
	$$
	\|\mu_{\varepsilon}-\mu_*\|_1 \leq \frac{K_1 \, \varepsilon}{1-C_1},
	$$ 
	where $C_1 \coloneqq \left(\frac{3 \, K_1}{2}  + \frac{K_1 \, K_{H_1}}{2\rho} \right)$. Note that by Assumption~\ref{as2}, $C_1 < 1$. Now, fix any policy $\pi \in \Pi$. Then, we have
	\begin{align}
	&\|J_{\mu_*}^{\reg}(\pi,\cdot)-J_{\mu_{\varepsilon}}^{\reg}(\pi,\cdot)\|_{\infty} \nonumber \\
	&=\sup_{x} \bigg| R^{\reg}(x,\pi(x),\mu_*) + \beta \, \sum_{y} J_{\mu_*}^{\reg}(\pi,y) \, p(y|x,\pi(x),\mu_*) \nonumber \\
	&\phantom{xx}-R^{\reg}(x,\pi(x),\mu_{\varepsilon}) - \beta \, \sum_{y} J_{\mu_{\varepsilon}}^{\reg}(\pi,y) \, p(y|x,\pi(x),\mu_{\varepsilon})\bigg| \nonumber \\
	&\leq L_1 \, \|\mu_*-\mu_{\varepsilon}\|_1 \nonumber \\
	&\phantom{xx} + \beta \sup_{x} \bigg|\sum_{y} J_{\mu_*}^{\reg}(\pi,y) \, p(y|x,\pi(x),\mu_*) - \sum_{y} J_{\mu_*}^{\reg}(\pi,y) \, p(y|x,\pi(x),\mu_{\varepsilon})\bigg| \nonumber \\
	&\phantom{xx} + \beta \sup_{x} \bigg|\sum_{y} J_{\mu_*}^{\reg}(\pi,y) \, p(y|x,\pi(x),\mu_{\varepsilon}) - \sum_{y} J_{\mu_{\varepsilon}}^{\reg}(\pi,y) \, p(y|x,\pi(x),\mu_{\varepsilon})\bigg| \nonumber \\
	&\overset{(II)}{\leq} \left(L_1+\frac{\beta K_1 K_{\Lip}}{2}\right) \|\mu_*-\mu_{\varepsilon}\|_1 + \beta \|J_{\mu_*}^{\reg}(\pi,\cdot)-J_{\mu_{\varepsilon}}^{\reg}(\pi,\cdot)\|_{\infty} \nonumber \\
	&\leq  \left(L_1+\frac{\beta K_1 K_{\Lip}}{2}\right) \frac{K_1  \varepsilon}{1-C_1}+ \beta \|J_{\mu_*}^{\reg}(\pi,\cdot)-J_{\mu_{\varepsilon}}^{\reg}(\pi,\cdot)\|_{\infty}. \nonumber 
	\end{align}
	Here (II) follows from (\ref{tv-bound}) and the fact that $J_{\mu_*}^{\reg}(\pi,\cdot)$ is $K_{\Lip}$-Lipschitz continuous, which can be proved as in Lemma~\ref{lip-value}.  
	Therefore, we obtain 
	\begin{align}\label{nneqq1}
	\|J_{\mu_*}^{\reg}(\pi,\cdot)-J_{\mu_{\varepsilon}}^{\reg}(\pi,\cdot)\|_{\infty} \leq \frac{C_2 \, \varepsilon}{1-\beta}, 
	\end{align}
	where $C_2 \coloneqq \left(L_1+\frac{\beta K_1 K_{\Lip}}{2}\right) \frac{K_1}{1-C_1}$.

	Note that we also have 
	\begin{align}
	&\|J_{\mu_*}^{\reg}(\pi_*,\cdot)-J_{\mu_*}^{\reg}(\pi_{\varepsilon},\cdot)\|_{\infty} \nonumber \\
	&=\sup_{x} \bigg| R^{\reg}(x,\pi_*(x),\mu_*) + \beta \, \sum_{y} J_{\mu_*}^{\reg}(\pi_*,y) \, p(y|x,\pi_*(x),\mu_*) \nonumber \\
	&\phantom{xx}-R^{\reg}(x,\pi_{\varepsilon}(x),\mu_*) - \beta \, \sum_{y} J_{\mu_*}^{\reg}(\pi_*,y) \, p(y|x,\pi_{\varepsilon}(x),\mu_*)\bigg| \nonumber \\
	&\leq (L_1 + L_{\reg}) \, \sup_{x} \|\pi_*(x)-\pi_{\varepsilon}(x)\|_1 \nonumber \\
	&\phantom{xx} + \beta \sup_{x} \bigg|\sum_{y} J_{\mu_*}^{\reg}(\pi_*,y) \, p(y|x,\pi_*(x),\mu_*) - \sum_{y} J_{\mu_*}^{\reg}(\pi_*,y) \, p(y|x,\pi_{\varepsilon}(x),\mu_*)\bigg| \nonumber \\
	&\phantom{xx} + \beta \sup_{x} \bigg|\sum_{y} J_{\mu_*}^{\reg}(\pi_*,y) \, p(y|x,\pi_{\varepsilon}(x),\mu_*) - \sum_{y} J_{\mu_*}^{\reg}(\pi_{\varepsilon},y) \, p(y|x,\pi_{\varepsilon}(x),\mu_*)\bigg| \nonumber \\
	&\overset{(III)}{\leq}  \left(L_1+L_{\reg}+\frac{\beta K_1 K_{\Lip}}{2}\right) \sup_x \|\pi_*(x)-\pi_{\varepsilon}(x)\|_1 \nonumber \\
	&\phantom{xxxx}+ \beta \|J_{\mu_*}^{\reg}(\pi_*,\cdot)-J_{\mu_*}^{\reg}(\pi_{\varepsilon},\cdot)\|_{\infty} \nonumber \\
	&\leq  \left(L_1+L_{\reg}+\frac{\beta K_1 K_{\Lip}}{2}\right) \, \varepsilon+ \beta \|J_{\mu_*}^{\reg}(\pi_*,\cdot)-J_{\mu_*}^{\reg}(\pi_{\varepsilon},\cdot)\|_{\infty}. \nonumber 
	\end{align}
	Here (III) follows from (\ref{tv-bound}) and the fact that $J_{\mu_*}^{\reg}(\pi_*,\cdot)$ is $K_{\Lip}$-Lipschitz continuous, which can be proved as in Lemma~\ref{lip-value}.
	Therefore, we obtain 
	\begin{align}\label{nneqq2}
	\|J_{\mu_*}^{\reg}(\pi_*,\cdot)-J_{\mu_*}^{\reg}(\pi_{\varepsilon},\cdot)\|_{\infty} \leq \frac{C_3\varepsilon}{1-\beta}, 
	\end{align}
	where $C_3 \coloneqq \left(L_1+L_{\reg}+\frac{\beta K_1 K_{\Lip}}{2}\right)$. 
	
	Note that we must prove that
	\begin{align}
	J_i^{(N)}({\boldsymbol \pi}^{(N)}) &\geq \sup_{\pi^i \in \Pi_i} J_i^{(N)}({\boldsymbol \pi}^{(N)}_{-i},\pi^i) - \tau \, \varepsilon - \delta \label{old-eq13}
	\end{align}
	for each $i=1,\ldots,N$, when $N$ is sufficiently large. As the transition probabilities and the one-stage reward functions are the same for all agents, it is sufficient to prove (\ref{old-eq13}) for Agent~$1$ only. Given $\delta > 0$, for each $N\geq1$, let $\tpi^{(N)} \in \Pi_1$ be such that
	\begin{align}
	J_1^{(N)} (\tpi^{(N)},\pi_{\varepsilon},\ldots,\pi_{\varepsilon}) > \sup_{\pi' \in \Pi_1} J_1^{(N)} (\pi',\pi_{\varepsilon},\ldots,\pi_{\varepsilon}) - \frac{\delta}{3}. \nonumber
	\end{align}
	Then, by \cite[Theorem 4.10]{SaBaRaSIAM}, we have
	\begin{align}
	&\lim_{N\rightarrow\infty} J_1^{(N)} (\tpi^{(N)},\pi_{\varepsilon},\ldots,\pi_{\varepsilon}) = \lim_{N\rightarrow\infty} J_{\mu_{\varepsilon}}^{\reg}(\tpi^{(N)}) \nonumber \\
	&\leq \lim_{N\rightarrow\infty} J_{\mu_*}^{\reg}(\tpi^{(N)}) + \frac{C_2 \, \varepsilon}{1-\beta} \,\, \text{(by (\ref{nneqq1}))}\nonumber \\
	&\leq \sup_{\pi'} J_{\mu_*}^{\reg}(\pi') + \frac{C_2 \, \varepsilon}{1-\beta} \nonumber \\
	&= J_{\mu_*}^{\reg}(\pi_*) + \frac{C_2 \, \varepsilon}{1-\beta} \nonumber \\
	&\leq J_{\mu_*}^{\reg}(\pi_{\varepsilon}) + \frac{C_2 \, \varepsilon}{1-\beta} + \frac{C_3 \, \varepsilon}{1-\beta}\,\, \text{(by (\ref{nneqq2}))}\nonumber \\
	&\leq J_{\mu_{\varepsilon}}^{\reg}(\pi_{\varepsilon}) + \frac{2 \, C_2 \, \varepsilon}{1-\beta} + \frac{C_3 \, \varepsilon}{1-\beta}\,\, \text{(by (\ref{nneqq1}))}\nonumber \\
	&= \lim_{N\rightarrow\infty} J_1^{(N)} (\pi_{\varepsilon},\pi_{\varepsilon},\ldots,\pi_{\varepsilon}) + \tau \,\varepsilon\nonumber. \nonumber
	\end{align}
	Therefore, there exists $N(\delta)$ such that
	\begin{align}
	&\sup_{\pi' \in \Pi_1} J_1^{(N)} (\pi',\pi_{\varepsilon},\ldots,\pi_{\varepsilon}) - \delta - \tau \,\varepsilon \nonumber \\
	&\leq J_1^{(N)} (\tpi^{(N)},\pi_{\varepsilon},\ldots,\pi_{\varepsilon}) - \frac{2\delta}{3} - \tau \, \varepsilon \nonumber \\
	&\leq J_{\mu_*}^{\reg}(\pi_{\varepsilon}) - \frac{\delta}{3}   \nonumber \\
	&\leq J_1^{(N)} (\pi_{\varepsilon},\pi_{\varepsilon},\ldots,\pi_{\varepsilon}). \nonumber
	\end{align}
	for all $N\geq N(\delta)$.


\begin{acknowledgements}
This work was partly supported by the BAGEP Award of the Science Academy.
\end{acknowledgements}

%
%

\end{document}